%% LyX 2.0.1 created this file.  For more info, see http://www.lyx.org/.
%% Do not edit unless you really know what you are doing.
\documentclass[twoside,english,american,a4paper]{scrartcl}
\usepackage[T1]{fontenc}
\usepackage[latin1]{inputenc}
\pagestyle{headings}
\setlength{\parskip}{\medskipamount}
\setlength{\parindent}{0pt}
\usepackage{babel}
\usepackage{prettyref}
\usepackage{amsthm}
\usepackage{amsmath}
\usepackage{amssymb}
\usepackage[unicode=true,pdfusetitle,
 bookmarks=true,bookmarksnumbered=false,bookmarksopen=false,
 breaklinks=false,pdfborder={0 0 1},backref=false,colorlinks=false]
 {hyperref}

\makeatletter
%%%%%%%%%%%%%%%%%%%%%%%%%%%%%% Textclass specific LaTeX commands.
\theoremstyle{plain}
\newtheorem{thm}{\protect\theoremname}[section]
  \theoremstyle{definition}
  \newtheorem{defn}[thm]{\protect\definitionname}
  \theoremstyle{remark}
  \newtheorem{rem}[thm]{\protect\remarkname}
  \theoremstyle{plain}
  \newtheorem{prop}[thm]{\protect\propositionname}
  \theoremstyle{plain}
  \newtheorem{cor}[thm]{\protect\corollaryname}
  \theoremstyle{plain}
  \newtheorem{lem}[thm]{\protect\lemmaname}

%%%%%%%%%%%%%%%%%%%%%%%%%%%%%% User specified LaTeX commands.
%% LyX 1.6.7 created this file.  For more info, see http://www.lyx.org/.
%% Do not edit unless you really know what you are doing.

\usepackage{helvet}
\usepackage[T1]{fontenc}

\setcounter{tocdepth}{5}
\setlength{\parskip}{\medskipamount}
\setlength{\parindent}{0pt}
\usepackage{amsmath}
\usepackage{setspace}
\usepackage{amssymb}
\usepackage{enumerate}
\usepackage{url}
\usepackage{prettyref}

\newrefformat{prop}{Proposition \ref{#1}}
\newrefformat{lem}{Lemma \ref{#1}}
\newrefformat{thm}{Theorem \ref{#1}}
\newrefformat{cor}{Corollary \ref{#1}}
\newrefformat{rem}{Remark \ref{#1}}
\newrefformat{eq}{(\ref{#1})}
\newrefformat{item}{(\ref{#1})}

\makeatletter

\theoremstyle{definition}
\newtheorem*{hyp}{Hypotheses}

%%%%%%%%%%%%%%%%%%%%%%%%%%%%%% LyX specific LaTeX commands.

%%%%%%%%%%%%%%%%%%%%%%%%%%%%%% Textclass specific LaTeX 
%%%%%%%%%%%%%%%%%%%%%%%%%%%%%% User specified LaTeX commands.
%\usepackage[T1]{fontenc}    % Silbentrennung auch nach Umlauten m?glich

\usepackage[T1]{fontenc}    % Silbentrennung auch nach Umlauten m?glich

\usepackage{a4wide}        % A4-Format
\addtolength{\headheight}{2pt}
\usepackage{amsmath}

\usepackage{euscript}
\usepackage{mathtools}
\allowdisplaybreaks[1] % allow page breaks in some displayed equations
\usepackage{amsfonts}
\newenvironment{keywords}{ \noindent\footnotesize\textbf{Keywords and phrases:}}{}

\newenvironment{class}{\noindent\footnotesize\textbf{Mathematics subject classification 2010:}}{}

\usepackage{color}

                  % sets
%\newcommand*{\set}[1]{{\mathit{bold}{#1}}}                  % sets
                % fields
     % graph
%\newcommand*{\closure}[1]{\overline{#1}}            % closure
%\newcommand*{\closure}{\mathop{\mathrm{cl}}\nolimits}            %
            % domain

            % Span
            % degree
\newcommand*{\trace}{\operatorname{trace}}

\newcommand*{\dive}{\operatorname{div}}

\newcommand*{\Grad}{\operatorname{Grad}}
\newcommand*{\Div}{\operatorname{Div}}

\newcommand*{\grad}{\operatorname{grad}}

\newcommand*{\supp}{\operatorname{supp}}

\renewcommand*{\i}{\mathrm{i}}
%\newcommand*{\abs}[1]{\lvert#1\rvert}
%\newcommand*{\norm}[1]{\lvert#1\rvert}
%\newcommand*{\opnorm}[1]{\lVert#1\rVert}
%\renewcommand{\theequation}{\arabic{chapter}.\arabic{section}.\arabic{equation}}

%\newcommand{\f}[1]{\textup{(\ref{#1})}}
%\newcommand{\er}[1]{\textup{(#1)}}
%\DeclareRobustCommand{\f}[1]{\textup{(\ref{#1})}}
%\DeclareRobustCommand{\pref}[1]{\ref{#1}}
%\newcommand*{\la}{\big\langle}
%\newcommand*{\ra}{\big\rangle}

%\newcommand*{\id}{{\mathrm{id}}}

%\renewcommand{\closure}[1]{\operatorname{cl}(#1)}

%\renewcommand{\interior}[1]{\operatorname{int}(#1)}
%\renewcommand{\interior}[1]{\overset{\mathrm{o}}{#1}}
\DeclareMathAccent{\Circ}{\mathalpha}{operators}{"17}

\renewcommand{\Im}{\operatorname{\mathfrak{Im}}}
\renewcommand{\Re}{\operatorname{\mathfrak{Re}}}

%\newcmmand{\oline}[1]{\overline{#1}}
%\newcommand{\oline}[1]{\widehat{#1}}
%\newcommand{\uline}[1]{\underline{#1}}
%\newcommand{\uline}[1]{\check{#1}}
%\newcommand{\mline}[1]{\widetilde{#1}}

\renewcommand{\hat}{\widehat}

\renewcommand*{\epsilon}{\varepsilon}
\renewcommand*{\rho}{\varrho}

\arraycolsep2pt

\makeatother

\usepackage{babel}

%\selectlanguage{american}%
%\setcounter{section}{-1}

%\date{}

\AtBeginDocument{
  
}

\makeatother

  \addto\captionsamerican{\renewcommand{\corollaryname}{Corollary}}
  \addto\captionsamerican{\renewcommand{\definitionname}{Definition}}
  \addto\captionsamerican{\renewcommand{\lemmaname}{Lemma}}
  \addto\captionsamerican{\renewcommand{\propositionname}{Proposition}}
  \addto\captionsamerican{\renewcommand{\remarkname}{Remark}}
  \addto\captionsamerican{\renewcommand{\theoremname}{Theorem}}
  \addto\captionsenglish{\renewcommand{\corollaryname}{Corollary}}
  \addto\captionsenglish{\renewcommand{\definitionname}{Definition}}
  \addto\captionsenglish{\renewcommand{\lemmaname}{Lemma}}
  \addto\captionsenglish{\renewcommand{\propositionname}{Proposition}}
  \addto\captionsenglish{\renewcommand{\remarkname}{Remark}}
  \addto\captionsenglish{\renewcommand{\theoremname}{Theorem}}
  \providecommand{\corollaryname}{Corollary}
  \providecommand{\definitionname}{Definition}
  \providecommand{\lemmaname}{Lemma}
  \providecommand{\propositionname}{Proposition}
  \providecommand{\remarkname}{Remark}
\providecommand{\theoremname}{Theorem}

\begin{document}
\author{ Sascha Trostorff \\ Institut f\"ur Analysis, Fachrichtung Mathematik\\ Technische Universit\"at Dresden\\ Germany\\ sascha.trostorff@tu-dresden.de}

\title{On Integro-Differential Inclusions with Operator-valued Kernels.}

\maketitle
\begin{abstract} \textbf{Abstract}. We study integro-differential
inclusions in Hilbert spaces with operator-valued kernels and give
sufficient conditions for the well-posedness. We show that several
types of integro-differential equations and inclusions are covered
by the class of evolutionary inclusions and we therefore give criteria
for the well-posedness within this framework. As an example we apply
our results to the equations of visco-elasticity and to a class of
nonlinear integro-differential inclusions describing Phase Transition
phenomena in materials with memory.\end{abstract}

\begin{keywords} Integro-Differential Inclusion and Equations, Well-posedness
and Causality, Evolutionary Inclusions, Linear Material Laws, Maximal
Monotone Operators, Visco-Elasticity, Phase Transition Models.\end{keywords}

\begin{class} 35R09 (Integro-partial differential equations), 47G20
(Integro-Differential operators), 35F61 (Initial-boundary value problems
for nonlinear first-order systems), 46N20 (Applications to differential
and integral equations), 47J35 (Nonlinear evolution equations)  \end{class}

\newpage

\tableofcontents{} 

\newpage

\section{Introduction}

It appears that classical phenomena in mathematical physics, like
heat conduction, wave propagation or elasticity, show some memory
effects (see e.g. \cite{fabrizio1987mathematical,Nunziato1971}).
One way to mathematically model these effects is to use integro-differential
equations. Typically, linear integro-differential equations of hyperbolic
and parabolic type, discussed in the literature, are of the form 
\begin{equation}
\ddot{u}(t)+\intop_{-\infty}^{t}g(t-s)\ddot{u}(s)\mbox{ d}s+G^{\ast}Gu(t)-\intop_{-\infty}^{t}G^{\ast}h(t-s)Gu(s)\mbox{ d}s=f(t)\quad(t\in\mathbb{R}),\label{eq:hyper}
\end{equation}
and 
\begin{equation}
\dot{u}(t)+\intop_{-\infty}^{t}g(t-s)\dot{u}(s)\mbox{ d}s+G^{\ast}Gu(t)-\intop_{-\infty}^{t}G^{\ast}h(t-s)Gu(s)\mbox{ d}s=f(t)\quad(t\in\mathbb{R}),\label{eq:parabolic-1}
\end{equation}
respectively. In both cases $G$ denotes a closed, densely defined
linear operator on some Hilbert space, which, in applications, is
a differential operator with respect to the spatial variables. These
types of equations were treated by several authors, mostly assuming
that the kernels $g$ and $h$ are scalar-valued.\\
The theory of integro-differential equations has a long history and
there exists a large amount of works by several authors and we just
mention the monographs \cite{pruss1993evolutionary,Gripenberg1990}
and the references therein for possible approaches. Topics like well-posedness,
regularity and the asymptotic behaviour of solutions were studied
by several authors, even for semi-linear versions of \prettyref{eq:hyper}
or \prettyref{eq:parabolic-1} (e.g. \cite{Cavalcanti2003,Berrimi2006,Cannarsa2011}
for the hyperbolic and \cite{Aizicovici1980,Clement1992,Cannarsa2003}
for the parabolic case). \\
Our approach to deal with integro-differential equations invokes the
framework of evolutionary equations, introduced by Picard in \cite{Picard,Picard2010}.
The main idea is to rewrite the equations as problems of the form
\begin{equation}
\left(\partial_{0}M(\partial_{0}^{-1})+A\right)U=F,\label{eq:evol}
\end{equation}
or, more generally, as a differential inclusion. Here $\partial_{0}$
denotes the time-derivative established as a boundedly invertible
operator in a suitable exponentially weighted $L_{2}$-space. The
operator $M(\partial_{0}^{-1})$, called the linear material law,
is a bounded operator in time and space and is defined as an analytic,
operator-valued function of $\partial_{0}^{-1}$. The operator $A$
is assumed to be skew-selfadjoint. More generally, $A$ is a maximal
monotone, possibly set-valued, operator (we refer to the monographs
\cite{Brezis,hu2000handbook,showalter_book} for the theory of monotone
operators) and in this case \prettyref{eq:evol} becomes a differential
inclusion of the form 
\begin{equation}
(U,F)\in\partial_{0}M(\partial_{0}^{-1})+A,\label{eq:incl}
\end{equation}
which will be referred to as an evolutionary inclusion. These kinds
of problems were studied by the author in previous works (cf. \foreignlanguage{english}{\cite{Trostorff_2011,Trostorff2012_NA,Trostorff2012_nonlin_bd}})
and the well-posedness and causality was shown under suitable assumptions
on the material law $M(\partial_{0}^{-1})$. As it was already mentioned
in \cite{Kalauch2011}, the operator $M\left(\partial_{0}^{-1}\right)$
can be a convolution with an operator-valued function and we will
point out, which kind of linear material laws yield integro-differential
equations. We emphasize that problems of the form \prettyref{eq:hyper}
and \prettyref{eq:parabolic-1} are covered by \prettyref{eq:incl},
while the converse is false in general, unless additional compatibility
constraints on the operators $A$ and $M(\partial_{0}^{-1})$ are
imposed. Thus, on one hand our approach provides a unified solution
theory for a broader class of integro-differential inclusions. On
the other hand, since the solution theory for hyperbolic and parabolic
problems is the same, we cannot expect to obtain optimal regularity
results by this general approach, such as maximal regularity for parabolic-type
problems.\\
According to the solution theory for inclusions of the form \prettyref{eq:incl}
(see \cite[Theorem 3.7]{Trostorff2012_nonlin_bd} or \prettyref{thm:sol_theory}
in this article) it suffices to show the strict positive definiteness
of $\Re\partial_{0}M(\partial_{0}^{-1})$ in order to obtain well-posedness
of the problem. Besides existence, uniqueness and continuous dependence
we obtain the causality of the respective solution operators, which
enables us to treat initial value problems. \\
The article is structured as follows. In Section 2 we recall the notion
of linear material laws, evolutionary inclusions and we state the
solution theory for this class of problems. Section 3 is devoted to
the well-posedness of hyperbolic- and parabolic-type integro-differential
inclusions. We will state conditions for the involved kernels, which
imply the positive definiteness of $\Re\partial_{0}M(\partial_{0}^{-1})$
and therefore yield the well-posedness of the problems, where we focus
on kernels which are not differentiable. Furthermore, in Subsection
3.1 we will briefly discuss a way of how to treat initial value problems
(see \prettyref{rem:ivp}) as well as problems where the whole history
of the unknown is given (\prettyref{rem: history}) in the case of
a skew-selfadjoint operator $A$. Finally, we apply our findings in
Section 4 to concrete examples. At first, we consider the equations
of visco-elasticity, which were also treated by Dafermos \cite{Dafermos1970_asymp_stab,Dafermos1970_abtract_Volterra},
even for operator-valued kernels but under the stronger assumption
that the kernels are absolutely continuous. As a second example we
deal with a nonlinear integro-differential inclusion, arising in the
theory of phase transition problems in materials with long-term memory
effects (see \cite{Colli1993,Visintin1985}). \\
Throughout, every Hilbert space is assumed to be complex and the inner
product, denoted by $\langle\cdot|\cdot\rangle$ is linear in the
second and anti-linear in the first argument. Norms are usually denoted
by $|\cdot|$ except the operator-norm, which we denote by $\|\cdot\|.$

\section{Evolutionary inclusions}

In this section we recall the notion of evolutionary inclusions due
to \cite{Trostorff_2011,Trostorff2012_nonlin_bd}, based on the framework
of evolutionary equations introduced in \cite{Picard,Picard2010,Picard_McGhee}.
We begin to define the exponentially weighted $L_{2}$-space and the
time-derivative $\partial_{0}$, established as a normal, boundedly
invertible operator on this space. Using the spectral representation
of this time-derivative operator, we define so called linear material
laws as operator-valued $\mathcal{H}^{\infty}$-functions of $\partial_{0}^{-1}.$
In the second subsection we recall the solution theory for evolutionary
inclusions and the notion of causality.

\subsection{The time-derivative and linear material laws}

Throughout let $\nu\in\mathbb{R}$. As in \cite{Picard_McGhee,Picard,Kalauch2011}
we begin to introduce the exponentially weighted $L_{2}$-space%
\footnote{For convenience we always identify the equivalence classes with respect
to the equality almost everywhere with their respective representers.%
}.
\begin{defn}
We define the Hilbert space
\[
H_{\nu,0}(\mathbb{R})\coloneqq\left\{ f:\mathbb{R}\to\mathbb{C}\,\left|\, f\mbox{ measurable},\:\intop_{\mathbb{R}}|f(t)|^{2}e^{-2\nu t}\mbox{ d}t<\infty\right.\right\} 
\]
endowed with the inner-product 
\[
\langle f|g\rangle_{H_{\nu,0}}\coloneqq\intop_{\mathbb{R}}f(t)^{\ast}g(t)e^{-2\nu t}\mbox{ d}t\quad(f,g\in H_{\nu,0}(\mathbb{R})).
\]
\end{defn}
\begin{rem}
Obviously the operator 
\[
e^{-\nu m}:H_{\nu,0}(\mathbb{R})\to L_{2}(\mathbb{R})
\]
defined by $\left(e^{-\nu m}f\right)(t)=e^{-\nu t}f(t)$ for $t\in\mathbb{R}$
is unitary. 
\end{rem}
We define the derivative $\partial$ on $L_{2}(\mathbb{R})$ as the
closure of the operator 
\begin{align*}
\partial|_{C_{c}^{\infty}(\mathbb{R})}:C_{c}^{\infty}(\mathbb{R})\subseteq L_{2}(\mathbb{R}) & \to L_{2}(\mathbb{R})\\
\phi & \mapsto\phi',
\end{align*}
where $C_{c}^{\infty}(\mathbb{R})$ denotes the space of infinitely
differentiable functions on $\mathbb{R}$ with compact support. This
operator is known to be skew-selfadjoint (see \cite[p. 198, Example 3]{Yosida})
and its spectral representation is given by the Fourier transform
$\mathcal{F}$, which is given as the unitary continuation of 
\[
\left(\mathcal{F}\phi\right)(t)\coloneqq\frac{1}{\sqrt{2\pi}}\intop_{\mathbb{R}}e^{-\i st}\phi(s)\mbox{ d}s\quad(t\in\mathbb{R})
\]
for functions $\phi\in L_{1}(\mathbb{R})\cap L_{2}(\mathbb{R}),$
i.e., we have 
\begin{equation}
\partial=\mathcal{F}^{\ast}(\i m)\mathcal{F},\label{eq:Fourier}
\end{equation}
where $m:D(m)\subseteq L_{2}(\mathbb{R})\to L_{2}(\mathbb{R})$ denotes
the multiplication-by-the-argument operator ($\left(mf\right)(t)=tf(t)$)
with maximal domain $D(m).$ 
\begin{defn}
We define the operator $\partial_{\nu}$ on $H_{\nu,0}(\mathbb{R})$
by 
\[
\partial_{\nu}\coloneqq\left(e^{-\nu m}\right)^{-1}\partial e^{-\nu m}
\]
and obtain again a skew-selfadjoint operator. From \prettyref{eq:Fourier}
we immediately get 
\[
\partial_{\nu}=\left(e^{-\nu m}\right)^{-1}\mathcal{F}^{\ast}\i m\mathcal{F}e^{-\nu m},
\]
which yields the spectral representation for $\partial_{\nu}$ by
the so-called \emph{Fourier-Laplace transform }$\mathcal{L}_{\nu}\coloneqq\mathcal{F}e^{-\nu m}:H_{\nu,0}(\mathbb{R})\to L_{2}(\mathbb{R})$. 
\end{defn}
An easy computation shows, that for $\phi\in C_{c}^{\infty}(\mathbb{R})$
we get $\phi'=\partial_{\nu}\phi+\nu\phi,$ which leads to the following
definition.
\begin{defn}
We define the operator $\partial_{0,\nu}\coloneqq\partial_{\nu}+\nu$,
the \emph{time-derivative} on $H_{\nu,0}(\mathbb{R})$. If the choice
of $\nu\in\mathbb{R}$ is clear from the context we will write $\partial_{0}$
instead of $\partial_{0,\nu}.$\end{defn}
\begin{rem}
Another way to introduce $\partial_{0,\nu}$ is by taking the closure
of the usual derivative of test-functions with respect to the topology
in $H_{\nu,0}(\mathbb{R}),$ i.e. 
\[
\partial_{0,\nu}=\overline{\partial|_{C_{c}^{\infty}(\mathbb{R})}}^{H_{\nu,0}(\mathbb{R})\oplus H_{\nu,0}(\mathbb{R})}.
\]

\end{rem}
We state some properties of the derivative $\partial_{0,\nu}$ and
refer to \cite{Kalauch2011,Picard_McGhee} for the proofs.
\begin{prop}
Let $\nu>0$. Then the following statements hold:

\begin{enumerate}[(a)]

\item  The operator $\partial_{0,\nu}$ is normal and $0\in\rho(\partial_{0,\nu})$
with $\|\partial_{0,\nu}^{-1}\|=\frac{1}{\nu}.$

\item  $\partial_{0,\nu}=\mathcal{L}_{\nu}^{\ast}(\i m+\nu)\mathcal{L}_{\nu}.$

\item  For $u\in H_{\nu,0}(\mathbb{R})$ we have \foreignlanguage{english}{\textup{$\left(\partial_{0,\nu}^{-1}u\right)(t)=\intop_{-\infty}^{t}u(s)\mbox{ d}s$}}
for almost every $t\in\mathbb{R}.$%
\footnote{This shows, that for $\nu>0$ the operator $\partial_{0,\nu}^{-1}$
is causal, while for $\nu<0$ we get the anti-causal operator given
by $\partial_{0,\nu}^{-1}u=-\intop_{\cdot}^{\infty}u(s)\mbox{ d}s$
(see \cite{Kalauch2011}).%
} 

\end{enumerate}
\end{prop}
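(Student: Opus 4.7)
The plan is to handle (b) first by a direct unitary-equivalence computation, deduce (a) from standard spectral calculus for a multiplication operator, and finally prove (c) by identifying $\partial_{0,\nu}^{-1}$ with a convolution operator via the Fourier--Laplace transform.

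For (b) I would simply chain the two defining identities. The preceding paragraph gives $\partial_\nu = (e^{-\nu m})^{-1}\mathcal{F}^\ast(\i m)\mathcal{F}e^{-\nu m} = \mathcal{L}_\nu^\ast(\i m)\mathcal{L}_\nu$. Since $\mathcal{L}_\nu$ is unitary, $\nu I = \mathcal{L}_\nu^\ast(\nu I)\mathcal{L}_\nu$, so $\partial_{0,\nu}=\partial_\nu+\nu = \mathcal{L}_\nu^\ast(\i m+\nu)\mathcal{L}_\nu$.

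For (a), unitary equivalence via $\mathcal{L}_\nu$ reduces all claims to the multiplication operator $\i m+\nu$ on $L_2(\mathbb{R})$. This operator is normal because multiplication by any measurable function is normal, so $\partial_{0,\nu}$ is normal. Its spectrum is the essential range of $t\mapsto \i t+\nu$, which is the line $\nu+\i\mathbb{R}$; since $\nu>0$, the point $0$ lies at distance $\nu$ from this set, hence $0\in\rho(\partial_{0,\nu})$. The inverse is unitarily equivalent to multiplication by $(\i t+\nu)^{-1}$, whose operator norm equals $\|(\i t+\nu)^{-1}\|_\infty=\sup_{t\in\mathbb{R}}(t^2+\nu^2)^{-1/2}=1/\nu$.

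For (c), I would introduce the candidate $C\colon u\mapsto \int_{-\infty}^{\cdot}u(s)\,\mathrm{d}s$ and show $C=\mathcal{L}_\nu^\ast(\i m+\nu)^{-1}\mathcal{L}_\nu$, which by (b) is $\partial_{0,\nu}^{-1}$. Let $h(s)\coloneqq e^{-\nu s}\chi_{[0,\infty)}(s)$; since $\nu>0$, $h\in L_1(\mathbb{R})$. A direct change of variables gives $(e^{-\nu m}Cu)(t)=\bigl(h\ast(e^{-\nu m}u)\bigr)(t)$, and the usual computation $\mathcal{F}h(t)=(2\pi)^{-1/2}(\i t+\nu)^{-1}$ combined with the $L_1\ast L_2$ convolution theorem yields $\mathcal{F}(e^{-\nu m}Cu)=(\i t+\nu)^{-1}\mathcal{F}(e^{-\nu m}u)$, i.e.\ $\mathcal{L}_\nu Cu=(\i m+\nu)^{-1}\mathcal{L}_\nu u$. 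The main obstacle is the bookkeeping for well-definedness: one has to verify that the pointwise integral $\int_{-\infty}^{t}u(s)\,\mathrm{d}s$ converges almost everywhere, which follows from Cauchy--Schwarz with the weight, $\int_{-\infty}^{t}|u(s)|\,\mathrm{d}s \le \bigl(\int_{-\infty}^{t}e^{2\nu s}\,\mathrm{d}s\bigr)^{1/2}\|u\|_{H_{\nu,0}}<\infty$, and that $h\ast(e^{-\nu m}u)\in L_2(\mathbb{R})$ by Young's inequality, so that both sides of the Fourier identity are legitimate elements of $L_2(\mathbb{R})$.
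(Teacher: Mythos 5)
Your proposal is correct. The paper does not actually supply a proof of this proposition --- it defers to the references \cite{Kalauch2011,Picard_McGhee} --- but your argument (unitary equivalence via $\mathcal{L}_{\nu}$ to the multiplication operator $\i m+\nu$ for (a) and (b), and the identification of $\partial_{0,\nu}^{-1}$ with convolution against $e^{-\nu(\cdot)}\chi_{[0,\infty)}$ for (c)) is precisely the spectral-representation route that the paper's own setup suggests and that those references carry out, so nothing further is needed.
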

Of course, the operator $\partial_{0,\nu}$ can be lifted in the canonical
way to Hilbert space-valued functions and for convenience we will
use the same notation for the derivative on scalar-valued and on Hilbert
space-valued functions. The space of Hilbert space-valued functions,
which are square-integrable with respect to the exponentially weighted
Lebesgue measure will be denoted by $H_{\nu,0}(\mathbb{R};H)$ for
$\nu\in\mathbb{R}.$ Using the spectral representation for the inverse
time-derivative $\partial_{0,\nu}^{-1}$ for $\nu>0$, we introduce
linear material laws as follows.
\begin{defn}
For $r>0$ let $M:B_{\mathbb{C}}(r,r)\to L(H)$ be a bounded, analytic
function. Then we define the bounded linear operator 
\[
M\left(\frac{1}{\i m+\nu}\right):L_{2}(\mathbb{R};H)\to L_{2}(\mathbb{R};H)
\]
for $\nu>\frac{1}{2r}$ by $\left(M\left(\frac{1}{\i m+\nu}\right)f\right)(t)=M\left(\frac{1}{\i t+\nu}\right)f(t)$
for $t\in\mathbb{R}$ and the \emph{linear material law} $M(\partial_{0,\nu}^{-1})$
by 
\[
M(\partial_{0,\nu}^{-1})\coloneqq\mathcal{L}_{\nu}^{\ast}M\left(\frac{1}{\i m+\nu}\right)\mathcal{L}_{\nu}\in L(H_{\nu,0}(\mathbb{R};H),H_{\nu,0}(\mathbb{R};H)).
\]

\end{defn}
Note that the operator $M(\partial_{0,\nu}^{-1})$, as a function
of $\partial_{0,\nu}^{-1}$, commutes with the derivative $\partial_{0,\nu},$
in the sense that $\partial_{0,\nu}M(\partial_{0,\nu}^{-1})\supseteq M(\partial_{0,\nu}^{-1})\partial_{0,\nu}.$
\begin{rem}
The assumed analyticity of the mapping $M$ is needed to ensure the
\emph{causality} (see \prettyref{thm:sol_theory}) of the operator
$M(\partial_{0,\nu}^{-1})$ using a Paley-Wiener-type result (cf.
\cite[Theorem 19.2]{rudin1987real}). 
\end{rem}

\subsection{Well-posedness and causality of evolutionary inclusions}

In \cite{Trostorff2012_nonlin_bd} the following type of a differential
inclusion was considered:
\begin{equation}
(U,F)\in\partial_{0,\nu}M(\partial_{0,\nu}^{-1})+A_{\nu}\label{eq:evol_eq}
\end{equation}
where $A\subseteq H\oplus H$ is a maximal monotone relation%
\footnote{Frequently, maximal monotone operators are defined as set- or multi-valued
mappings, i.e. $A:D(A)\subseteq H\to\mathcal{P}(H)$. However, we
identify this mapping with the relation given by 
\[
\left\{ (x,y)\in H\oplus H\,|\, y\in A(x)\right\} ,
\]
and denote this relation again by $A$.%
} satisfying $(0,0)\in A$, $A_{\nu}$ is its extension to $H_{\nu,0}(\mathbb{R};H)\oplus H_{\nu,0}(\mathbb{R};H)$
given by%
\footnote{If the choice of $\nu$ is clear from the context, we may omit the
index $\nu$ for the extension and simply write $A$.%
} 
\[
A_{\nu}\coloneqq\left\{ (u,v)\in H_{\nu,0}(\mathbb{R};H)^{2}\,|\,(u(t),v(t))\in A\;(t\in\mathbb{R}\:\mbox{a.e.})\right\} ,
\]
which again defines a maximal monotone relation (see \cite[Exemple 2.3.3]{Brezis}),
$F\in H_{\nu,0}(\mathbb{R};H)$ is an arbitrary source term and $U\in H_{\nu,0}(\mathbb{R};H)$
is the unknown. For this class of problems the following solution
theory was established.
\begin{thm}[{\cite[Theorem 3.7]{Trostorff2012_nonlin_bd}}]
 \label{thm:sol_theory}Let $A\subseteq H\oplus H$ be maximal monotone
with $(0,0)\in A$ and let $M:B_{\mathbb{C}}(r,r)\to L(H)$ be analytic,
bounded and such that there exists $c>0$ such that for all $z\in B_{\mathbb{C}}(r,r)$
the following holds 
\begin{equation}
\Re z^{-1}M(z)\geq c.\label{eq:solv}
\end{equation}
Then for each $\nu>\frac{1}{2r}$ the inverse relation $\left(\partial_{0,\nu}M(\partial_{0,\nu}^{-1})+A_{\nu}\right)^{-1}$
defines a Lipschitz-continuous mapping on $H_{\nu,0}(\mathbb{R};H).$
Moreover, the inverse is causal, i.e. 
\[
\chi_{\mathbb{R}_{\leq a}}(m)\overline{\left(\partial_{0,\nu}M(\partial_{0,\nu}^{-1})+A_{\nu}\right)^{-1}}\chi_{\mathbb{R}_{\leq a}}(m)=\chi_{\mathbb{R}_{\leq a}}(m)\overline{\left(\partial_{0,\nu}M(\partial_{0,\nu}^{-1})+A_{\nu}\right)^{-1}}
\]
for each $a\in\mathbb{R}.$%
\footnote{Here we denote by $\chi_{\mathbb{R}_{\leq a}}(m)$ the cut-off operator
given by $\left(\chi_{\mathbb{R}_{\leq a}}(m)f\right)(t)=\chi_{\mathbb{R}_{\leq a}}(t)f(t).$%
}
\end{thm}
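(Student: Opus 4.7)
The plan is to reduce well-posedness to two key ingredients: strict monotonicity of $B := \partial_{0,\nu} M(\partial_{0,\nu}^{-1})$, which delivers single-valuedness and a Lipschitz bound for the inverse, and surjectivity of $B + A_\nu$ onto $H_{\nu,0}(\mathbb{R};H)$, which is the main technical task. For strict monotonicity I would use the spectral representation $B = \mathcal{L}_\nu^\ast \bigl((\i m + \nu) M(1/(\i m + \nu))\bigr) \mathcal{L}_\nu$, together with the geometric observation that $\{1/(\i t + \nu) : t \in \mathbb{R}\} \subseteq \overline{B_\mathbb{C}(r,r)}$ for $\nu \geq 1/(2r)$ (the condition $|1/(\i t + \nu) - r|^2 \leq r^2$ reduces after a short computation to $\nu \geq 1/(2r)$). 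The hypothesis $\Re z^{-1} M(z) \geq c$ then applies pointwise in $t$, so
\[
\Re \langle u | B u \rangle_{H_{\nu,0}(\mathbb{R};H)} \geq c\, |u|_{H_{\nu,0}(\mathbb{R};H)}^2 \qquad (u \in D(B)).
\]
Combined with the pointwise-inherited monotonicity of $A_\nu$, this yields $\Re\langle u_1 - u_2 | f_1 - f_2 \rangle \geq c\, |u_1 - u_2|^2$ for every $(u_j, f_j) \in B + A_\nu$, whence Cauchy--Schwarz provides single-valuedness and the $1/c$-Lipschitz estimate on $(B + A_\nu)^{-1}$ on its domain.

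The main obstacle is surjectivity. I would approximate $A_\nu$ by its Yosida regularization $A_\lambda := \lambda^{-1}(I - (I + \lambda A_\nu)^{-1})$, which is everywhere defined, monotone and globally Lipschitz. For each $\lambda > 0$, solvability of $B u_\lambda + A_\lambda u_\lambda = f$ is established via a Banach fixed-point argument carried out at a sufficiently enlarged exponential weight $\mu \geq \nu$, with a causality/cut-off argument transporting the solution back to $H_{\nu,0}(\mathbb{R};H)$. The a priori bound from the previous step is uniform in $\lambda$, so both $(u_\lambda)$ and $(A_\lambda u_\lambda)$ remain bounded; extracting a weakly convergent subnet and invoking Minty's monotonicity trick together with the graph convergence of $A_\lambda$ to $A_\nu$ identifies a limit $u$ satisfying $f \in B u + A_\nu u$, which closes the existence step.

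Finally, causality of $(B + A_\nu)^{-1}$ follows from the causality of each constituent: $\partial_{0,\nu}^{-1}$ is causal by the antiderivative formula, $M(\partial_{0,\nu}^{-1})$ is causal via a Paley--Wiener-type theorem applied to the bounded analytic function $M$ on $B_\mathbb{C}(r, r)$ (cf.\ \cite[Theorem~19.2]{rudin1987real}), and $A_\nu$ commutes with $\chi_{\mathbb{R}_{\leq a}}(m)$ by its pointwise definition. Applying the strict monotonicity estimate to the truncated difference $\chi_{\mathbb{R}_{\leq a}}(m)(u_1 - u_2)$ of two solutions whose right-hand sides agree on $(-\infty, a]$ then yields the claimed causality identity.
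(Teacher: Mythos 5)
First, a point of reference: the paper does not prove this theorem at all --- it is imported verbatim from \cite[Theorem 3.7]{Trostorff2012_nonlin_bd}, so there is no in-paper proof to compare against. Measured against the argument in that reference, your first block (spectral representation of $\partial_{0,\nu}M(\partial_{0,\nu}^{-1})$ via $\mathcal{L}_\nu$, the computation showing $\{1/(\i t+\nu): t\in\mathbb{R}\}\subseteq B_{\mathbb{C}}(r,r)$ for $\nu>\frac{1}{2r}$, strict monotonicity, and the $1/c$-Lipschitz bound) and your causality block (Paley--Wiener for $M$, causality of $\partial_{0,\nu}^{-1}$, locality of $A_\nu$, then the monotonicity estimate applied to $\chi_{\mathbb{R}_{\leq a}}(m)(u_1-u_2)$) are exactly the standard ingredients and are fine.

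The existence step, however, has a genuine gap, in two places. (a) The Banach fixed-point argument for $Bu_\lambda+A_\lambda u_\lambda=f$ does not close: $A_\lambda$ has Lipschitz constant of order $1/\lambda$, while $\|B^{-1}\|\leq 1/c$ with $c$ the constant from \prettyref{eq:solv}, which is \emph{uniform} in $z$ and therefore does not improve when you enlarge the exponential weight; so the map $u\mapsto B^{-1}(f-A_\lambda u)$ fails to contract precisely in the regime $\lambda\to 0$ that you need. (This is repairable: $B+A_\lambda$ is maximal monotone as a Lipschitz monotone perturbation of the maximal monotone $B$, and it is $c$-strongly monotone, hence surjective by Minty--Browder; no fixed point is needed.) (b) More seriously, the assertion that $(A_\lambda u_\lambda)$ remains bounded is unjustified. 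The a priori estimate controls only $|u_\lambda|\leq|f|/c$; bounding the $A$-component of a sum of two maximal monotone operators requires an additional angle or commutation condition between $B$ and $A$, which is not available here. This is precisely why the theorem, as the paper itself remarks immediately after stating it, only yields $(U,F)\in\overline{\partial_{0,\nu}M(\partial_{0,\nu}^{-1})+A_{\nu}}$: one proves that the \emph{closure} of the sum is maximal (equivalently, that the range of the sum is dense), not that the sum itself is surjective. The limit argument must therefore be arranged to produce an element of the closed graph --- e.g., by showing $(u_\lambda)$ is Cauchy via the strong monotonicity estimate applied to differences $u_\lambda-u_{\lambda'}$ and then passing to the limit in the graphs --- rather than by separately identifying weak limits of $Bu_\lambda$ and $A_\lambda u_\lambda$. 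As written, your Minty step proves more than is true in general.
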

This means that under the hypotheses of \prettyref{thm:sol_theory},
Problem \prettyref{eq:evol_eq} is well-posed, i.e. the uniqueness,
existence and continuous dependence on the data $F$ of a solution
$U$ is guaranteed. However, \prettyref{eq:evol_eq} just holds in
the sense of 
\[
(U,F)\in\overline{\partial_{0,\nu}M(\partial_{0,\nu}^{-1})+A_{\nu}}
\]
where the closure is taken with respect to the topology on $H_{\nu,0}(\mathbb{R};H).$
If $A$ is a maximal monotone, linear operator, we can avoid the closure,
by using the concept of extrapolation spaces, so-called Sobolev-chains
with respect to the operator $A+1$ and $\partial_{0,\nu}$ (see \cite{Picard2000},
\cite[Chapter 2]{Picard_McGhee}). In this context Equation \prettyref{eq:evol_eq}
holds in the space $H_{\nu,-1}(\mathbb{R};H_{-1}(A+1))$, where we
denote by $\left(H_{\nu,k}(\mathbb{R})\right)_{k\in\mathbb{Z}}$ the
Sobolev-chain associated to $\partial_{0,\nu}.$ Using that $M(\partial_{0,\nu}^{-1})$
and $A$ commute with $\partial_{0,\nu}$, one derives the following
corollary from \prettyref{thm:sol_theory}.
\begin{cor}[{\cite[Theorem 6.2.5]{Picard_McGhee}}]
\label{cor:sol_extrapolation}Let $A:D(A)\subseteq H\to H$ a linear
maximal monotone operator and $M:B_{\mathbb{C}}(r,r)\to L(H)$ as
in \prettyref{thm:sol_theory}. Then the solution operator $\left(\partial_{0,\nu}M(\partial_{0,\nu}^{-1})+A_{\nu}\right)^{-1}$
extends to a bounded linear operator on $H_{\nu,k}(\mathbb{R};H)$
for each $k\in\mathbb{Z}.$ 
\end{cor}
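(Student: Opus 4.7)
The plan is to reduce the claim to the fact that the solution operator $S := \left(\partial_{0,\nu}M(\partial_{0,\nu}^{-1})+A_{\nu}\right)^{-1}$ commutes with $\partial_{0,\nu}$. \prettyref{thm:sol_theory} already provides $S$ as a Lipschitz mapping on $H_{\nu,0}(\mathbb{R};H)$, and since $A$ is linear (so $A_{\nu}$ is a single-valued linear operator and the sum $\partial_{0,\nu}M(\partial_{0,\nu}^{-1})+A_{\nu}$ is a linear relation), $S$ is in fact a bounded linear operator. Granted the commutation with $\partial_{0,\nu}$, the extension to $H_{\nu,k}(\mathbb{R};H)$ is essentially forced by the very definition of the Sobolev chain.

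First I would show that $S$ commutes with the bounded operator $\partial_{0,\nu}^{-1}$. The operator $\partial_{0,\nu}M(\partial_{0,\nu}^{-1})$ trivially commutes with $\partial_{0,\nu}^{-1}$ since both are (spectral) functions of $\partial_{0,\nu}$. For $A_{\nu}$, the key observation is that $A$ is a closed linear operator (every linear maximal monotone operator on a Hilbert space is closed and densely defined) and $\partial_{0,\nu}^{-1}$ acts as pointwise convolution with the Heaviside function on $H_{\nu,0}(\mathbb{R};H)$. Using linearity of $A$ on Riemann-sum approximations of this convolution and the closedness of $A$ to pass to the limit, one obtains $\partial_{0,\nu}^{-1}A_{\nu}\subseteq A_{\nu}\partial_{0,\nu}^{-1}$. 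Combining, the relation $\partial_{0,\nu}M(\partial_{0,\nu}^{-1})+A_{\nu}$ commutes with $\partial_{0,\nu}^{-1}$, and by inverting the relation this gives $\partial_{0,\nu}^{-1}S=S\partial_{0,\nu}^{-1}$.

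With this in hand, $S$ commutes with every integer power $\partial_{0,\nu}^{k}$, and for $u\in H_{\nu,0}(\mathbb{R};H)$ we obtain
\[
\|Su\|_{H_{\nu,k}}=\|\partial_{0,\nu}^{k}Su\|_{H_{\nu,0}}=\|S\partial_{0,\nu}^{k}u\|_{H_{\nu,0}}\leq\|S\|_{L(H_{\nu,0})}\|\partial_{0,\nu}^{k}u\|_{H_{\nu,0}}=\|S\|_{L(H_{\nu,0})}\|u\|_{H_{\nu,k}}.
\]
For $k\geq 0$ this shows that $S$ maps $D(\partial_{0,\nu}^{k})=H_{\nu,k}(\mathbb{R};H)$ boundedly into itself; for $k<0$ it shows that $S$ is bounded on the dense subspace $H_{\nu,0}(\mathbb{R};H)\subseteq H_{\nu,k}(\mathbb{R};H)$ with respect to the weaker $H_{\nu,k}$-norm, so it extends by density and continuity to all of $H_{\nu,k}(\mathbb{R};H)$ with the same operator norm.

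The main technical point, and essentially the only one requiring care, is the justification of $\partial_{0,\nu}^{-1}A_{\nu}\subseteq A_{\nu}\partial_{0,\nu}^{-1}$; this is precisely where the linearity assumption on $A$ enters (and where the general Lipschitz result of \prettyref{thm:sol_theory} alone would not suffice, since a nonlinear $A$ need not intertwine with an integral in this way). The rest of the argument is routine bookkeeping within the Sobolev-chain framework associated to $\partial_{0,\nu}$.
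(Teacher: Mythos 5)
Your proposal is correct and follows essentially the same route as the paper, which derives the corollary from \prettyref{thm:sol_theory} precisely by observing that $M(\partial_{0,\nu}^{-1})$ and $A_{\nu}$ (hence the solution operator) commute with $\partial_{0,\nu}$ and then invoking the standard Sobolev-chain extension mechanism. Your write-up merely fleshes out the two points the paper leaves implicit (the Hille-type argument that the closed linear operator $A$ intertwines with the time integral $\partial_{0,\nu}^{-1}$, and the density/norm bookkeeping for $k<0$), both of which are sound.
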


\section{Integro-differential inclusions}

In this section we introduce an abstract type of integro-differential
inclusions with operator-valued kernels, which covers hyperbolic-
and parabolic-type problems. This abstract type allows to treat convolutions
with the unknown as well as with the derivatives (with respect to
time and space) of the unknown. We introduce the space $L_{1,\mu}(\mathbb{R}_{\geq0};L(H))$
for $\mu\in\mathbb{R}$ as the space of weakly measurable functions
$B:\mathbb{R}_{\geq0}\to L(H)$ (i.e. for every $x,y\in H$ the function
$t\mapsto\langle B(t)x|y\rangle$ is measurable) such that the function
$t\mapsto\|B(t)\|$ is measurable%
\footnote{If $H$ is separable, then the weak measurability implies the measurability
of $t\mapsto\|B(t)\|.$%
} and 
\[
|B|_{L_{1,\mu}(\mathbb{R}_{\ge0};L(H))}\coloneqq\intop_{0}^{\infty}e^{-\mu t}\|B(t)\|\mbox{ d}t<\infty.
\]
Note that $L_{1,\mu}(\mathbb{R}_{\geq0};L(H))\hookrightarrow L_{1,\nu}(\mathbb{R}_{\geq0};L(H))$
for $\mu\leq\nu.$ For a function $B\in L_{1,\mu}(\mathbb{R}_{\geq0};L(H))$
we can establish the Fourier-transform of $B$ as a function on the
lower half-plane $[\mathbb{R}]-\i[\mathbb{R}_{\geq\mu}]\coloneqq\{t-\i\nu\,|\, t\in\mathbb{R},\nu\geq\mu\}$
by defining 
\[
\langle\hat{B}(t-\i\nu)x|y\rangle\coloneqq\frac{1}{\sqrt{2\pi}}\intop_{0}^{\infty}e^{-\i ts}e^{-\nu s}\langle B(s)x|y\rangle\mbox{ d}s\quad(t\in\mathbb{R},\nu\geq\mu)
\]

for $x,y\in H.$ Obviously the function $t-\i\nu\mapsto\hat{B}(t-\i\nu)$
is bounded on $[\mathbb{R}]-\i[\mathbb{R}_{\geq\mu}]$ with values
in the bounded operators on $H$ and satisfies $|\hat{B}|_{L_{\infty}([\mathbb{R}]-\i[\mathbb{R}_{>\mu}];L(H))}\leq\frac{1}{\sqrt{2\pi}}|B|_{L_{1,\mu}(\mathbb{R}_{\geq0};L(H))}.$
Moreover it is analytic on the open half plane $[\mathbb{R}]-\i[\mathbb{R}_{>\mu}]$%
\footnote{Note that scalar analyticity on a norming set and local boundedness
is equivalent to analyticity (see \cite[Theorem 3.10.1]{hille1957functional}). %
}. For $B\in L_{1,\mu}(\mathbb{R}_{\geq0};L(H))$ we define the convolution
operator as follows.
\begin{lem}
\label{lem:conv_op-1} Let \textbf{$B\in L_{1,\mu}(\mathbb{R}_{\geq0};L(H))$
}for some $\mu\in\mathbb{R}$. We denote by $S(\mathbb{R};H)$ the
space of simple functions on $\mathbb{R}$ with values in $H$. Then
for each $\nu\geq\mu$ the \emph{convolution operator}%
\footnote{The integral is defined in the weak sense.%
} 
\begin{align*}
B\ast:S(\mathbb{R};H)\subseteq H_{\nu,0}(\mathbb{R};H) & \to H_{\nu,0}(\mathbb{R};H)\\
u & \mapsto\left(t\mapsto\intop_{\mathbb{R}}B(t-s)u(s)\,\mathrm{d}s\right)
\end{align*}
is bounded with $\|B\ast\|_{L(H_{\nu,0}(\mathbb{R};H))}\leq|B|_{L_{1,\nu}(\mathbb{R}_{\geq0};L(H))}$.
Hence, it can be extended to a bounded linear operator on $H_{\nu,0}(\mathbb{R};H).$\end{lem}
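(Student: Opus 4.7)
The plan is to reduce the desired estimate to a Minkowski-type integral inequality in the exponentially weighted space $H_{\nu,0}(\mathbb{R};H)$, after first checking that the convolution integral is pointwise well-defined on simple functions.

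First, I would verify that for $u\in S(\mathbb{R};H)$ and almost every $t\in\mathbb{R}$ the expression $\int_{\mathbb{R}}B(t-s)u(s)\,\mathrm{d}s$ exists as a weak (Pettis) integral in $H$. Writing $u=\sum_{i=1}^{n}\chi_{A_{i}}x_{i}$ with bounded measurable sets $A_{i}\subseteq\mathbb{R}$ and $x_{i}\in H$, weak measurability of $s\mapsto B(t-s)x_{i}$ follows from that of $B$, while measurability of $t\mapsto\|B(t)\|$ together with $B\in L_{1,\mu}(\mathbb{R}_{\geq0};L(H))$ gives local integrability of $\|B(\cdot)\|$ on $\mathbb{R}_{\geq0}$ and hence $\int_{A_{i}}\|B(t-s)\|\,\mathrm{d}s<\infty$ for almost every $t$. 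Combined with the estimate $|\langle B(t-s)x_{i}|y\rangle|\leq\|B(t-s)\|\,|x_{i}|\,|y|$, this provides the existence of the weak integral, and the resulting function $t\mapsto(B*u)(t)$ is weakly measurable.

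Next, I would apply Minkowski's integral inequality. Since $B$ is supported in $\mathbb{R}_{\geq0}$, we have the pointwise bound $|(B*u)(t)|\leq\int_{0}^{\infty}\|B(r)\|\,|u(t-r)|\,\mathrm{d}r$ for almost every $t$, and therefore
\begin{align*}
\|B*u\|_{H_{\nu,0}(\mathbb{R};H)} & \leq\left\| t\mapsto\int_{0}^{\infty}\|B(r)\|\,|u(t-r)|\,\mathrm{d}r\right\|_{H_{\nu,0}(\mathbb{R})}\\
 & \leq\int_{0}^{\infty}\|B(r)\|\,\bigl\| t\mapsto|u(t-r)|\bigr\|_{H_{\nu,0}(\mathbb{R})}\,\mathrm{d}r.
\end{align*}
The translation substitution $t\mapsto t+r$ yields $\bigl\| t\mapsto|u(t-r)|\bigr\|_{H_{\nu,0}(\mathbb{R})}=e^{-\nu r}\|u\|_{H_{\nu,0}(\mathbb{R};H)}$, so that the right-hand side equals $|B|_{L_{1,\nu}(\mathbb{R}_{\geq0};L(H))}\,\|u\|_{H_{\nu,0}(\mathbb{R};H)}$, which is the claimed estimate on simple functions.

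The extension step is then routine: since $S(\mathbb{R};H)$ is dense in $H_{\nu,0}(\mathbb{R};H)$, the operator $B*$ extends uniquely to a bounded linear operator on the whole space with the same norm bound. The main technical point is the first step, since $B$ is only assumed weakly measurable (with $\|B(\cdot)\|$ measurable), so Bochner integrability is not directly at our disposal; one has to justify everything through the Pettis integral and carefully invoke Fubini/Minkowski in the operator-valued setting. Once that is in place, the estimate itself is a single application of Minkowski's integral inequality together with the translation invariance behaviour of the exponential weight.
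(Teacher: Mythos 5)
Your proof is correct and follows essentially the same route as the paper: the paper inserts the exponential weights into the convolution integral and invokes Young's inequality for $L^{1}\ast L^{2}$, which is exactly the Minkowski-plus-translation computation you carry out explicitly. Your additional care about the weak (Pettis) sense of the integral on simple functions is a point the paper passes over silently, but it does not change the substance of the argument.
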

\begin{proof}
Let $\nu\geq\mu.$ Then we estimate for $u\in S(\mathbb{R};H)$ using
Young's inequality 
\begin{align*}
\intop_{\mathbb{R}}e^{-2\nu t}|(B\ast u)(t)|^{2}\mbox{ d}t & \leq\intop_{\mathbb{R}}\left(\intop_{\mathbb{R}}e^{-\nu(t-s)}\|B(t-s)\|e^{-\nu s}|u(s)|\mbox{ d}s\right)^{2}\mbox{ d}t\\
 & \leq\left(\intop_{\mathbb{R}}e^{-\nu t}\|B(t)\|\mbox{ d}t\right)^{2}\intop_{\mathbb{R}}|u(t)|^{2}e^{-2\nu t}\mbox{ d}t,
\end{align*}
which yields $B\ast u\in H_{\nu,0}(\mathbb{R};H)$ and 
\[
|B\ast u|_{H_{\nu,0}(\mathbb{R};H)}\leq\intop_{0}^{\infty}e^{-\nu t}\|B(t)\|\mbox{ d}t\,|u|_{H_{\nu,0}(\mathbb{R};H)}.
\]
This completes the proof.\end{proof}
\begin{rem}
\label{rem:conv_sobolev}Note that since $B\ast$ commutes with $\partial_{0,\nu}$
we can extend $B\ast$ to a bounded linear operator on $H_{\nu,k}(\mathbb{R};H)$
for each $k\in\mathbb{Z}.$\end{rem}
\begin{cor}
\label{cor:Neumann}Let $B\in L_{1,\mu}(\mathbb{R}_{\geq0};L(H))$.
Then $\lim_{\nu\to\infty}|B|_{L_{1,\nu}(\mathbb{R}_{\geq0};L(H))}=0$
and thus, $\lim_{\nu\to\infty}\|B\ast\|_{L(H_{\nu,0}(\mathbb{R};H))}=0.$ \end{cor}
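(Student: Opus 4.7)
The plan is to reduce the statement to the scalar dominated convergence theorem applied to the function $t\mapsto e^{-\nu t}\|B(t)\|$ on $\mathbb{R}_{\geq 0}$, and then combine this with the operator-norm bound supplied by the preceding lemma.

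First I would handle the first assertion. For any $\nu\geq\mu$ and any $t\geq 0$ one has the pointwise bound
\[
e^{-\nu t}\|B(t)\|\leq e^{-\mu t}\|B(t)\|,
\]
and the majorant $t\mapsto e^{-\mu t}\|B(t)\|$ is integrable on $\mathbb{R}_{\geq 0}$ by the hypothesis $B\in L_{1,\mu}(\mathbb{R}_{\geq 0};L(H))$. Moreover, for every $t>0$ we have $e^{-\nu t}\|B(t)\|\to 0$ as $\nu\to\infty$. Lebesgue's dominated convergence theorem then yields
\[
\lim_{\nu\to\infty}|B|_{L_{1,\nu}(\mathbb{R}_{\geq 0};L(H))}=\lim_{\nu\to\infty}\intop_{0}^{\infty}e^{-\nu t}\|B(t)\|\dd t=0.
\]

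For the second assertion I would simply invoke \prettyref{lem:conv_op-1}, which asserts that for every $\nu\geq\mu$
\[
\|B\ast\|_{L(H_{\nu,0}(\mathbb{R};H))}\leq|B|_{L_{1,\nu}(\mathbb{R}_{\geq 0};L(H))}.
\]
Taking $\nu\to\infty$ and using the first part of the statement gives $\|B\ast\|_{L(H_{\nu,0}(\mathbb{R};H))}\to 0$.

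Essentially no obstacle is expected; the only point that deserves a moment of care is verifying the measurability of $t\mapsto\|B(t)\|$ needed to apply dominated convergence, but this is built into the definition of the space $L_{1,\mu}(\mathbb{R}_{\geq 0};L(H))$ stated just before \prettyref{lem:conv_op-1}. Hence the argument is straightforward and can be written in a few lines.
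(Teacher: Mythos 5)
Your proof is correct and follows essentially the same route as the paper, which likewise deduces the result from \prettyref{lem:conv_op-1} together with a standard convergence theorem (the paper cites monotone convergence, exploiting that $\nu\mapsto e^{-\nu t}\|B(t)\|$ is decreasing, whereas you use dominated convergence with the majorant $e^{-\mu t}\|B(t)\|$ --- an immaterial difference). No gaps.
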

\begin{proof}
This is an immediate consequence of \prettyref{lem:conv_op-1} and
the monotone convergence theorem.\end{proof}
\begin{lem}
Let $B\in L_{1,\mu}(\mathbb{R};L(H))$ for some $\mu\geq0$ and $u\in H_{\nu,0}(\mathbb{R};H)$
for $\nu\geq\mu.$ Then 
\[
\left(\mathcal{L}_{\nu}(B\ast u)\right)(t)=\sqrt{2\pi}\hat{B}(t-\i\nu)\left(\mathcal{L}_{\nu}u(t)\right)
\]
for almost every $t\in\mathbb{R}.$\end{lem}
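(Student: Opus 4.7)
The plan is to reduce the claim to the classical convolution theorem for the Fourier transform on $L_2(\mathbb{R})$ by factoring $\mathcal{L}_\nu=\mathcal{F}\,e^{-\nu m}$ and pushing the exponential weight through the convolution, then handling the operator-valued aspect by pairing with arbitrary test vectors.

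First, I extend $B$ by zero to a function on all of $\mathbb{R}$ and observe that, for almost every $s\in\mathbb{R}$,
\[
\bigl(e^{-\nu m}(B\ast u)\bigr)(s)=\intop_{\mathbb{R}} e^{-\nu(s-r)}B(s-r)\,e^{-\nu r}u(r)\,\mathrm{d}r=(C\ast v)(s),
\]
where $C(s)\coloneqq e^{-\nu s}B(s)$ and $v\coloneqq e^{-\nu m}u$. Since $\nu\geq\mu$, we have $\|C\|\in L_1(\mathbb{R})$ with $|C|_{L_1(\mathbb{R};L(H))}=|B|_{L_{1,\nu}(\mathbb{R}_{\geq0};L(H))}$, and $v\in L_2(\mathbb{R};H)$ by unitarity of $e^{-\nu m}$. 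Hence $\mathcal{L}_\nu(B\ast u)=\mathcal{F}(C\ast v)$, reducing the problem to the unweighted $L_2$-setting.

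Second, I would establish the operator-valued convolution theorem
\[
\mathcal{F}(C\ast v)(t)=\sqrt{2\pi}\,(\mathcal{F}C)(t)(\mathcal{F}v)(t)\qquad\text{a.e.\ }t\in\mathbb{R},
\]
where $\mathcal{F}C$ is defined weakly by $\langle(\mathcal{F}C)(t)x\,|\,y\rangle\coloneqq\frac{1}{\sqrt{2\pi}}\intop_{\mathbb{R}}e^{-\i st}\langle C(s)x|y\rangle\,\mathrm{d}s$. For $H$-valued simple functions $v=\sum_k x_k\chi_{A_k}$ with $A_k$ of finite measure, I pair both sides with an arbitrary $y\in H$ and apply the scalar convolution theorem to each $s\mapsto\langle C(s)x_k|y\rangle\in L_1(\mathbb{R})$ together with the bounded, compactly supported $\chi_{A_k}\in L_1(\mathbb{R})\cap L_2(\mathbb{R})$; Fubini is justified by $\|C\|\in L_1$ and the finite measure of $A_k$. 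Then I extend by density: the left-hand side is continuous in $v\in L_2(\mathbb{R};H)$ by \prettyref{lem:conv_op-1} (with $\nu=0$) combined with Plancherel, while the right-hand side is continuous in $v$ because $\|(\mathcal{F}C)(t)\|\leq\frac{1}{\sqrt{2\pi}}|C|_{L_1(\mathbb{R};L(H))}$ uniformly in $t$ and $\mathcal{F}v\in L_2(\mathbb{R};H)$ by Plancherel. Density of simple functions in $L_2(\mathbb{R};H)$ then yields the identity almost everywhere.

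Finally, the constants and transforms line up correctly: by construction $(\mathcal{F}C)(t)=\hat{B}(t-\i\nu)$, matching the definition of $\hat{B}$ given before the lemma, and $(\mathcal{F}v)(t)=(\mathcal{F}e^{-\nu m}u)(t)=(\mathcal{L}_\nu u)(t)$. Combining the three steps gives the asserted identity. The main obstacle is the second step: the algebraic manipulation is textbook, but one has to be scrupulous about the vector/operator setting, using the weak pairing with $y\in H$ to reduce to the scalar convolution theorem and verifying that both sides of the proposed identity depend continuously on $v$, so that the density argument is genuinely valid.
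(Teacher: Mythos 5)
Your argument is correct and is essentially the proof the paper has in mind: the paper dismisses this lemma with "a classical computation using Fubini's Theorem", and your reduction to the unweighted convolution $C\ast v$ with $C(s)=e^{-\nu s}B(s)$, the scalar convolution theorem applied after pairing with $y\in H$ on simple functions (which is exactly where Fubini enters), and the extension by density using the $L_1$-bound on $\|C\ast\|$ and Plancherel is the standard way to make that computation rigorous for general $u\in H_{\nu,0}(\mathbb{R};H)$. The identification $(\mathcal{F}C)(t)=\hat{B}(t-\i\nu)$ matches the paper's weak definition of $\hat{B}$ verbatim, so the constants and transforms line up as you claim.
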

\begin{proof}
The proof is a classical computation using Fubini's Theorem and we
omit it.
\end{proof}

\subsection{Material laws for hyperbolic-type problems}

In this subsection we consider material laws of the form 
\begin{equation}
M_{0}(z)=1+\sqrt{2\pi}\hat{C}(-\i z^{-1})\quad(z\in B_{\mathbb{C}}(r,r))\label{eq:material_law_hyper}
\end{equation}
and 
\begin{equation}
M_{1}(z)=\left(1-\sqrt{2\pi}\hat{B}(-\i z^{-1})\right)^{-1}\quad(z\in B_{\mathbb{C}}(r,r))\label{eq:material_law_hyper_resolvent}
\end{equation}
for some $C\in L_{1,\mu}(\mathbb{R}_{\geq0};L(H_{0})),B\in L_{1,\mu}(\mathbb{R}_{\geq0};L(H_{1}))$,
where $\mu\geq0$. According to \prettyref{cor:Neumann} there exists
$\mu_{0}>\mu,$ such that $\|B\ast\|_{L(H_{\nu,0}(\mathbb{R};H_{1}))}<1$
for each $\nu\geq\mu_{0}$. To ensure that the function $M_{1}$ defines
a linear material law, we choose $r\coloneqq\frac{1}{2\mu_{0}}.$\\
A typical example for a material law for hyperbolic-type systems is
of the form 
\[
M(z)=\left(\begin{array}{cc}
M_{0}(z) & 0\\
0 & M_{1}(z)
\end{array}\right).
\]
Then the corresponding integro differential inclusion \prettyref{eq:evol_eq}
reads as 
\begin{equation}
\left(\left(\begin{array}{c}
v\\
q
\end{array}\right),\left(\begin{array}{c}
f\\
g
\end{array}\right)\right)\in\partial_{0}\left(\begin{array}{cc}
1+C\ast & 0\\
0 & (1-B\ast)^{-1}
\end{array}\right)+A,\label{eq:hyper-1}
\end{equation}
where $U=\left(\begin{array}{c}
v\\
q
\end{array}\right),F=\left(\begin{array}{c}
f\\
g
\end{array}\right)\in H_{\nu,0}(\mathbb{R};H_{0}\oplus H_{1}).$ If $A$ is given as a block operator matrix of the form 
\[
\left(\begin{array}{cc}
0 & G^{\ast}\\
-G & 0
\end{array}\right)
\]
for some closed linear operator $G:D(G)\subseteq H_{0}\to H_{1}$,
and if $g=0$, we obtain by the second row of \prettyref{eq:hyper-1}
\[
\partial_{0}(1-B\ast)^{-1}q=Gv
\]
or, equivalently, 
\[
q=\partial_{0}^{-1}(1-B\ast)Gv.
\]
If we plug this representation of $q$ into the first line of \prettyref{eq:hyper-1}
we get that 
\[
\partial_{0}(1+C\ast)v+G^{\ast}\partial_{0}^{-1}(1-B\ast)Gv=f
\]
which gives, by defining $u\coloneqq\partial_{0}^{-1}v$ 
\[
\partial_{0}^{2}(1+C\ast)u+A^{\ast}(1-B\ast)Au=f.
\]
Thus, a solution theory for \prettyref{eq:hyper-1} will cover hyperbolic
equations of the form \prettyref{eq:hyper}. A semi-linear version
of this equation was treated in \cite{Cannarsa2011} for scalar-valued
kernels, where criteria for the well-posedness and the exponential
stability were given. Also in \cite{Pruss2009} this type of equation
was treated for scalar-valued kernels and besides well-posedness,
the polynomial stability was addressed. In both works the well-posedness
(the existence and uniqueness of mild solutions) was shown under certain
conditions on the kernel by techniques developed for evolutionary
integral equations (see \cite{pruss1993evolutionary}). We will show
that the assumptions on the kernels made in both articles can be weakened
such that the well-posedness of the problem can still be shown, even
for operator-valued kernels. \\
In order to show the solvability condition \prettyref{eq:solv} for
the material laws $M_{0}$ and $M_{1}$ we have to show that there
exist $r_{1},c>0$ with $r_{1}\leq r$ such that for all $z\in B_{\mathbb{C}}(r_{1},r_{1}):$
\begin{equation}
\Re z^{-1}(1+\sqrt{2\pi}\hat{C}(-\i z^{-1}))\geq c\label{eq:solv_C}
\end{equation}
and 
\begin{equation}
\Re z^{-1}(1-\sqrt{2\pi}\hat{B}(-\i z^{-1}))^{-1}\geq c.\label{eq:solv_B}
\end{equation}

\begin{rem}
\label{rem:absolute_cont}One standard assumption for scalar-valued
kernels is absolute continuity. In our case this means that there
exists a function $B'\in L_{1,\mu}(\mathbb{R}_{\geq0};L(H_{1}))$
for some $\mu\geq0$ such that 
\[
B(t)=\intop_{0}^{t}B'(s)\mbox{ d}s+B(0)\quad(t\in\mathbb{R}_{\geq0})
\]
for the kernel $B.$\\
Note that due to the absolute continuity, \textbf{$B$ }is an element
of \foreignlanguage{english}{$L_{1,\nu}(\mathbb{R}_{\geq0};L(H_{1}))$}\textbf{
}for each $\nu>\mu$ and we choose $\nu$ large enough, such that
$|B|_{L_{1,\nu}(\mathbb{R}_{\geq0};L(H_{1}))}\!<\!1.$ In this case
\prettyref{eq:solv_B} can be easily verified. Using the Neumann series
we obtain 
\begin{align*}
z^{-1}(1-\sqrt{2\pi}\hat{B}(-\i z^{-1}))^{-1} & =z^{-1}\sum_{k=0}^{\infty}\left(\sqrt{2\pi}\hat{B}(-\i z^{-1})\right)^{k}\\
 & =z^{-1}+z^{-1}\sqrt{2\pi}\hat{B}(-\i z^{-1})\sum_{k=0}^{\infty}\left(\sqrt{2\pi}\hat{B}(-\i z^{-1})\right)^{k}
\end{align*}
for $z\in B_{\mathbb{C}}\left(\frac{1}{2\nu},\frac{1}{2\nu}\right).$
The Fourier transform of $B$ can be computed by 
\[
\hat{B}(-\i z^{-1})=z\left(\hat{B'}(-\i z^{-1})+\frac{1}{\sqrt{2\pi}}B(0)\right)
\]
and hence, we can estimate 
\begin{align*}
 & \Re z^{-1}(1-\sqrt{2\pi}\hat{B}(-\i z^{-1}))^{-1}\\
 & =\Re z^{-1}+\Re\left(\sqrt{2\pi}\hat{B'}(-\i z^{-1})+B(0)\right)\sum_{k=0}^{\infty}\left(\sqrt{2\pi}\hat{B}(-\i z^{-1})\right)^{k}\\
 & \geq\nu-\frac{|\sqrt{2\pi}\hat{B'}(-\i\left(\cdot\right)^{-1})|_{L_{\infty}\left(B_{\mathbb{C}}\left(\frac{1}{2\nu},\frac{1}{2\nu}\right);L(H_{1})\right)}+\|B(0)\|_{L(H_{1})}}{1-|\sqrt{2\pi}\hat{B}(-\i\left(\cdot\right)^{-1})|_{L_{\infty}\left(B_{\mathbb{C}}\left(\frac{1}{2\nu},\frac{1}{2\nu}\right);L(H_{1})\right)}}\\
 & \geq\nu-\frac{|B'|_{L_{1,\nu}(\mathbb{R}_{\geq0};L(H_{1}))}+\|B(0)\|_{L(H_{1})}}{1-|B|_{L_{1,\nu}(\mathbb{R}_{\geq0};L(H_{1}))}}
\end{align*}
for every $z\in B_{\mathbb{C}}\left(\frac{1}{2\nu},\frac{1}{2\nu}\right).$
Since $\frac{|B'|_{L_{1,\nu}(\mathbb{R}_{\geq0};L(H_{1}))}+\|B(0)\|_{L(H_{1})}}{1-|B|_{L_{1,\nu}(\mathbb{R}_{\geq0};L(H_{1}))}}\to\|B(0)\|_{L(H_{1})}$
as $\nu\to\infty$, this yields the assertion. An analogous result
holds for absolutely continuous $C$.
\end{rem}
In the case when $C$ and $B$ are not assumed to be differentiable
in a suitable sense, the conditions \prettyref{eq:solv_B} and \prettyref{eq:solv_C}
are hard to verify. We now state some hypotheses for $B$ and $C$
and show in the remaining part of this subsection, that these conditions
imply \prettyref{eq:solv_B} and \prettyref{eq:solv_C}.

\begin{hyp} Let $T\in L_{1,\mu}(\mathbb{R}_{\geq0};L(G)),$ where
$G$ is an arbitrary Hilbert space and $\mu\geq0.$ Then $T$ satisfies
the hypotheses \prettyref{eq:selfadjoint},\prettyref{eq:commute}
and \prettyref{eq:imaginary} respectively, if

\begin{enumerate}[(i)]

\item \label{eq:selfadjoint}for all $t\in\mathbb{R}_{\geq0}$ the
operator $T(t)$ is selfadjoint,

\item \label{eq:commute} for all $s,t\in\mathbb{R}_{\geq0}$ the
operators $T(t)$ and $T(s)$ commute,

\item \label{eq:imaginary} there exists $d\geq0,$ $\nu_{0}\geq\mu$
such that for all $t\in\mathbb{R}$
\[
t\Im\hat{T}(t-\i\nu_{0})\leq d.
\]

\end{enumerate}

\end{hyp}
\begin{rem}
$\,$\begin{enumerate}[(a)]

\item  If $T$ satisfies the hypothesis \prettyref{eq:selfadjoint},
then 
\[
\Im\hat{T}(t-\i\nu_{0})=\frac{1}{\sqrt{2\pi}}\intop_{0}^{\infty}\sin(-ts)e^{-\nu_{0}s}T(s)\mbox{ d}s=-\Im\hat{T}(-t-\i\nu_{0})\quad(t\in\mathbb{R})
\]
and thus \prettyref{eq:imaginary} holds if and only if 
\[
t\Im\hat{T}(t-\i\nu_{0})\leq d\quad(t\in\mathbb{R}_{>0}).
\]
\item Note that in \cite{Pruss2009} and \cite{Cannarsa2011} the
kernel is assumed to be real-valued. Thus, \prettyref{eq:selfadjoint}
and \prettyref{eq:commute} are trivially satisfied. In \cite{Pruss2009}
we find the assumption, that the kernel should be non-increasing and
non-negative, i.e., $T(s)\geq0$ and $T(t)-T(s)\leq0$ for each $t\geq s\geq0.$
Note that these assumptions imply 
\[
\langle\left(e^{-\nu t}T(t)-e^{-\nu s}T(s)\right)x|x\rangle=e^{-\nu t}\langle\left(T(t)-T(s)\right)x|x\rangle+(e^{-\nu t}-e^{-\nu s})\langle T(s)x|x\rangle\leq0
\]
for every $t\geq s\geq0,\nu\geq0$ and $x\in G$. Hence, we estimate
for $t>0$ and $x\in G$ 
\begin{align*}
 & \langle\Im\hat{T}(t-\i\nu_{0})x|x\rangle\\
 & =\frac{1}{\sqrt{2\pi}}\intop_{0}^{\infty}\sin(-ts)e^{-\nu_{0}s}\langle T(s)x|x\rangle\mbox{ d}s\\
 & =\frac{1}{\sqrt{2\pi}}\sum_{k=0}^{\infty}\left(\intop_{2k\frac{\pi}{t}}^{(2k+1)\frac{\pi}{t}}\sin(-ts)e^{-\nu_{0}s}\langle T(s)x|x\rangle\mbox{ d}s+\intop_{(2k+1)\frac{\pi}{t}}^{2(k+1)\frac{\pi}{t}}\sin(-ts)e^{-\nu_{0}s}\langle T(s)x|x\rangle\mbox{ d}s\right)\\
 & =\frac{1}{\sqrt{2\pi}}\sum_{k=0}^{\infty}\left(\intop_{2k\frac{\pi}{t}}^{(2k+1)\frac{\pi}{t}}\sin(-ts)e^{-\nu_{0}s}\langle T(s)u|u\rangle\mbox{ d}s\right.+\\
 & \left.\quad+\intop_{2k\frac{\pi}{t}}^{(2k+1)\frac{\pi}{t}}\sin(-ts-\pi)e^{-\nu_{0}\left(s+\frac{\pi}{t}\right)}\left\langle \left.T\left(s+\frac{\pi}{t}\right)u\right|u\right\rangle \mbox{ d}s\right)\\
 & =\frac{1}{\sqrt{2\pi}}\sum_{k=0}^{\infty}\intop_{2k\frac{\pi}{t}}^{(2k+1)\frac{\pi}{t}}\sin(-ts)\left\langle \left.\left(e^{-\nu_{0}s}T(s)-e^{-\nu_{0}\left(s+\frac{\pi}{t}\right)}T\left(s+\frac{\pi}{t}\right)\right)u\right|u\right\rangle \mbox{ d}s\\
 & =\frac{1}{\sqrt{2\pi}}\sum_{k=0}^{\infty}\intop_{2k\frac{\pi}{t}}^{(2k+1)\frac{\pi}{t}}\sin(ts)\left\langle \left.\left(e^{-\nu_{0}\left(s+\frac{\pi}{t}\right)}T\left(s+\frac{\pi}{t}\right)-e^{-\nu_{0}s}T(s)\right)u\right|u\right\rangle \mbox{ d}s\leq0,
\end{align*}
which yields \prettyref{eq:imaginary} for $d=0$ according to (a).
The authors of \cite{Cannarsa2011} assume that the integrated kernel
defines a positive definite convolution operator on $L_{2}(\mathbb{R}_{\geq0}).$
However, according to \cite[Proposition 2.2 (a)]{Cannarsa2011}, this
condition also implies \prettyref{eq:imaginary} for $d=0$.

\item By hypothesis \prettyref{eq:imaginary} we also cover kernels
of bounded variation. Indeed, if $T\in L_{1}(\mathbb{R}_{\geq0};L(G))$
is a function of strong bounded variation (see \cite[Definition 3.2.4]{hille1957functional}),
then $\sup_{t\in\mathbb{R}}\|t\hat{T}(t-\i\nu_{0})\|<\infty.$ This
is a stronger estimate than \prettyref{eq:imaginary}, which suggests
that our results apply to a broader class than functions of strong
bounded variation. However, to the authors best knowledge, there exists
no characterization of functions satisfying an estimate of the form
\prettyref{eq:imaginary} even in the scalar-valued case.

\end{enumerate}
\end{rem}
We now prove that a kernel $T$ satisfying the hypotheses \prettyref{eq:selfadjoint}
and \prettyref{eq:imaginary}, also satisfies an estimate of the form
\prettyref{eq:imaginary} for every $\nu\geq\nu_{0}.$ 
\begin{lem}
\label{lem:one_nu_every_nu}Assume that $T\in L_{1,\mu}(\mathbb{R}_{\geq0};L(G))$
satisfies the hypotheses \prettyref{eq:selfadjoint} and \prettyref{eq:imaginary}.
Then we have for all $\nu\geq\nu_{0}$ and $t\in\mathbb{R}$
\[
t\Im\hat{T}(t-\i\nu)\leq4d.
\]
\end{lem}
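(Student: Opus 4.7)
The plan is to lift the pointwise estimate at depth $\nu_{0}$ to arbitrary depth $\nu\geq\nu_{0}$ by convolution with the Poisson kernel $P_{\alpha}(x):=\frac{\alpha}{\pi(x^{2}+\alpha^{2})}$ ($\alpha:=\nu-\nu_{0}>0$). The key preliminary is the identity
\[
\hat T(t-\i\nu)=\bigl(P_{\alpha}\ast\hat T(\cdot-\i\nu_{0})\bigr)(t)\quad(t\in\mathbb{R}),
\]
which follows by the convolution theorem applied to the factorisation $\e^{-\nu s}T(s)\chi_{\mathbb{R}_{\geq0}}(s)=\e^{-\alpha\abs{s}}\e^{-\nu_{0}s}T(s)\chi_{\mathbb{R}_{\geq0}}(s)$ (valid because the second factor vanishes on $\mathbb{R}_{<0}$), together with the standard identity $\widehat{\e^{-\alpha\abs{\cdot}}}=\sqrt{2\pi}P_{\alpha}$. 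Passing to imaginary parts (in the operator sense $\Im A=\frac{1}{2\i}(A-A^{\ast})$) and pairing with an arbitrary $x\in G$, I set $h(s):=\langle\Im\hat T(s-\i\nu_{0})x|x\rangle$ and obtain $\langle\Im\hat T(t-\i\nu)x|x\rangle=(P_{\alpha}\ast h)(t)$. By \prettyref{eq:selfadjoint} together with part (a) of the remark preceding the lemma, $h$ is odd on $\mathbb{R}$, and by \prettyref{eq:imaginary} it satisfies $sh(s)\leq d\abs{x}^{2}$ for every $s\in\mathbb{R}$.

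Using the oddness of $h$ and the evenness of $P_{\alpha}$ to fold the integral onto $\mathbb{R}_{\geq0}$, combined with the elementary identity
\[
P_{\alpha}(t-s)-P_{\alpha}(t+s)=\frac{4\alpha ts}{\pi\bigl((t-s)^{2}+\alpha^{2}\bigr)\bigl((t+s)^{2}+\alpha^{2}\bigr)},
\]
I arrive at
\[
t(P_{\alpha}\ast h)(t)=\frac{4\alpha t^{2}}{\pi}\intop_{0}^{\infty}\frac{sh(s)\dd s}{\bigl((t-s)^{2}+\alpha^{2}\bigr)\bigl((t+s)^{2}+\alpha^{2}\bigr)}.
\]
Since $P_{\alpha}\ast h$ is odd, the map $t\mapsto t(P_{\alpha}\ast h)(t)$ is even, so I may restrict to $t\geq 0$; the kernel is then nonnegative on $\mathbb{R}_{\geq0}$ and the hypothesis $sh(s)\leq d\abs{x}^{2}$ can be inserted directly inside the integral.

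Finally, the scalar integral $\intop_{0}^{\infty}\frac{\dd s}{((t-s)^{2}+\alpha^{2})((t+s)^{2}+\alpha^{2})}$ evaluates to $\frac{\pi}{4\alpha(t^{2}+\alpha^{2})}$, either by partial fractions based on the factorisation $((t-s)^{2}+\alpha^{2})((t+s)^{2}+\alpha^{2})=(t^{2}+s^{2}+\alpha^{2})^{2}-4t^{2}s^{2}$, or by extending to $\mathbb{R}$ via evenness and a one-line residue computation at the poles $\pm t+\i\alpha$. Substituting yields $t\langle\Im\hat T(t-\i\nu)x|x\rangle\leq\frac{t^{2}}{t^{2}+\alpha^{2}}d\abs{x}^{2}\leq d\abs{x}^{2}\leq 4d\abs{x}^{2}$, and since $x\in G$ was arbitrary, the claimed operator inequality $t\Im\hat T(t-\i\nu)\leq 4d$ follows. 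The only step that really requires thought is the Poisson representation; because the argument above reduces it to the convolution theorem applied to explicit functions, no operator-valued $H^{\infty}$ theory is needed.
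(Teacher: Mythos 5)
Your proof is correct and follows essentially the same route as the paper: a Poisson-kernel representation of $\Im\hat{T}(\cdot-\i\nu)$ in terms of its boundary values at depth $\nu_{0}$, oddness from hypothesis (i) to fold the integral onto $\mathbb{R}_{\geq0}$, and nonnegativity of the resulting kernel to insert the bound from hypothesis (iii). The two deviations are refinements rather than a different method: you derive the Poisson representation from the convolution theorem applied to the factorisation $e^{-\nu s}=e^{-\alpha|s|}e^{-\nu_{0}s}$ on $\mathbb{R}_{\geq0}$ instead of citing the half-plane Poisson formula for the harmonic function $\Im\hat{f}$ (which sidesteps any discussion of its applicability), and by evaluating the folded integral exactly rather than bounding $t^{2}/((t+s)^{2}+\alpha^{2})\leq1$ you obtain the sharper constant $d$ in place of the paper's $4d$.
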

\begin{proof}
Let $x\in G$ and $\nu\geq\mu.$ We define the function 
\[
f(t)\coloneqq\langle T(t)x|x\rangle\quad(t\in\mathbb{R})
\]
which is real-valued, due to the selfadjointness of $T(t)$ and we
estimate 
\[
\intop_{\mathbb{R}}|f(t)|e^{-\mu t}\mbox{ d}t\leq\intop_{\mathbb{R}}\|T(t)\|e^{-\mu t}\mbox{ d}t\,|x|^{2}
\]
which shows $f\in L_{1,\mu}(\mathbb{R}_{>0})$. We observe that 
\begin{align*}
\left\langle \hat{T}(t-\i\nu)x|x\right\rangle  & =\left\langle \left.\frac{1}{\sqrt{2\pi}}\intop_{\mathbb{R}}e^{-\i ts}e^{-\nu s}T(s)\mbox{ d}s\: x\,\right|x\right\rangle \\
 & =\frac{1}{\sqrt{2\pi}}\intop_{\mathbb{R}}e^{\i ts}e^{-\nu s}\langle T(s)x|x\rangle\mbox{ d}s\\
 & =\hat{f}(-t-\i\nu)
\end{align*}
for each $t\in\mathbb{R},\nu\geq\mu.$ Hence, by 
\[
\langle\Im\hat{T}(t-\i\nu)x|x\rangle=-\Im\langle\hat{T}(t-\i\nu)x|x\rangle=-\Im\hat{f}(-t-\i\nu)=\Im f(t-\i\nu),
\]
it suffices to prove $t\Im\hat{f}(t-\i\nu)$ is bounded from above
for $\nu\geq\nu_{0},\; t\in\mathbb{R}$ under the condition that $t\Im\hat{f}(t-\i\nu_{0})\leq d$
for each $t\in\mathbb{R}.$ For this purpose we follow the strategy
in \cite[Lemma 3.4]{Cannarsa2003} and employ the Poisson formula
for the half plain (see \cite[p. 149]{stein2003fourier}) in order
to compute the values of the harmonic function $\Im\hat{f}(\cdot):[\mathbb{R}]-\i\left[\mathbb{R}_{\geq\mu}\right]\to\mathbb{R}.$
This gives, using $\Im\hat{f}(-s-\i\nu)=-\Im\hat{f}(s-\i\nu)$ 
\begin{align*}
\Im\hat{f}(t-\i\nu) & =\frac{1}{\pi}\intop_{-\infty}^{\infty}\frac{\nu-\nu_{0}}{(t-s)^{2}+(\nu-\nu_{0})^{2}}\Im\hat{f}(s-\i\nu_{0})\mbox{ d}s\\
 & =\frac{\nu-\nu_{0}}{\pi}\left(\intop_{0}^{\infty}\frac{1}{(t+s)^{2}+(\nu-\nu_{0})^{2}}\Im\hat{f}(-s-\i\nu_{0})\mbox{ d}s\right.+\\
 & \quad+\left.\intop_{0}^{\infty}\frac{1}{(t-s)^{2}+(\nu-\nu_{0})^{2}}\Im\hat{f}(s-\i\nu_{0})\mbox{ d}s\right)\\
 & =\frac{\nu-\nu_{0}}{\pi}\intop_{0}^{\infty}\left(\frac{1}{(t-s)^{2}+(\nu-\nu_{0})^{2}}-\frac{1}{(t+s)^{2}+(\nu-\nu_{0})^{2}}\right)\Im\hat{f}(s-\i\nu_{0})\mbox{ d}s\\
 & =4t\frac{\nu-\nu_{0}}{\pi}\intop_{0}^{\infty}\left(\frac{s}{\left((t-s)^{2}+(\nu-\nu_{0})^{2}\right)\left((t+s)^{2}+(\nu-\nu_{0})^{2}\right)}\right)\Im\hat{f}(s-\i\nu_{0})\mbox{ d}s,
\end{align*}
which implies 
\begin{align*}
 & \quad t\Im\hat{f}(t-\i\nu)\\
 & =4t^{2}\frac{\nu-\nu_{0}}{\pi}\intop_{0}^{\infty}\left(\frac{s}{\left((t-s)^{2}+(\nu-\nu_{0})^{2}\right)\left((t+s)^{2}+(\nu-\nu_{0})^{2}\right)}\right)\Im\hat{f}(s-\i\nu_{0})\mbox{ d}s\\
 & \leq4d\frac{\nu-\nu_{0}}{\pi}\intop_{0}^{\infty}\frac{1}{\left(t-s\right)^{2}+\left(\nu-\nu_{0}\right)^{2}}\mbox{ d}s\\
 & =4d\frac{1}{\pi}\intop_{-\infty}^{\frac{t}{\nu-\nu_{0}}}\frac{1}{r^{2}+1}\mbox{ d}r\\
 & \leq4d.\tag*{\qedhere}
\end{align*}
\end{proof}
\begin{rem}
The fact that $T$ satisfies \prettyref{eq:imaginary} not only for
one parameter $\nu_{0}>0$ but for all $\nu\geq\nu_{0}$ is crucial
for the causality of the solution operator. Indeed, if one is only
interested in the well-posedness of problems of the form \prettyref{eq:incl}
for one particular parameter $\nu_{0}$ it would be sufficient to
assume hypothesis \prettyref{eq:imaginary} and omit assumption \prettyref{eq:selfadjoint}.\\
We are now able to state our main results of this subsection.\end{rem}
\begin{prop}
\label{prop:pos_C} Let $C$ satisfy the hypotheses \prettyref{eq:selfadjoint}
and \prettyref{eq:imaginary}. Then there exists $0<r_{1}\leq r$
and $c>0$ such that for all $z\in B_{\mathbb{C}}(r_{1},r_{1})$ condition
\prettyref{eq:solv_C} is satisfied.\end{prop}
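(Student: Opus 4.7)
The plan is to parametrize $z\in B_{\mathbb C}(r_1,r_1)$ by writing $z^{-1}=\i t+\nu$ with $t\in\mathbb R$ and $\nu\geq\tfrac{1}{2r_1}$; this is legitimate because $z\in B_{\mathbb C}(r_1,r_1)$ is equivalent to $\Re z^{-1}>\tfrac{1}{2r_1}$. Testing against $x\in H_0$ and using $\Re((\i t+\nu)w)=\nu\Re w - t\Im w$ for $w\in\mathbb C$, the quantity $\Re\langle z^{-1}(1+\sqrt{2\pi}\hat C(-\i z^{-1}))x|x\rangle$ splits into three summands
$$\nu|x|^2 \;+\; \sqrt{2\pi}\,\nu\,\Re\langle\hat C(t-\i\nu)x|x\rangle \;-\; \sqrt{2\pi}\,\langle t\,\Im\hat C(t-\i\nu)x|x\rangle.$$

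I would then bound each summand separately. For the third, hypothesis \prettyref{eq:selfadjoint} guarantees that $\Im\hat C(t-\i\nu)$ is selfadjoint (via its representation as an integral involving $\sin$ against the selfadjoint operators $C(s)$), so \prettyref{lem:one_nu_every_nu} applies and extends the pointwise bound \prettyref{eq:imaginary} to every $\nu\geq\nu_0$, yielding $\langle t\,\Im\hat C(t-\i\nu)x|x\rangle\leq 4d|x|^2$. For the middle summand, the elementary estimate $\|\hat C(t-\i\nu)\|\leq\tfrac{1}{\sqrt{2\pi}}|C|_{L_{1,\nu}(\mathbb R_{\geq0};L(H_0))}$ yields
$$\sqrt{2\pi}\,\nu\,\Re\langle\hat C(t-\i\nu)x|x\rangle \geq -\nu\,|C|_{L_{1,\nu}(\mathbb R_{\geq0};L(H_0))}\,|x|^2.$$
Assembling the three bounds gives
$$\Re\langle z^{-1}(1+\sqrt{2\pi}\hat C(-\i z^{-1}))x|x\rangle \geq \bigl[\nu\bigl(1-|C|_{L_{1,\nu}(\mathbb R_{\geq0};L(H_0))}\bigr) - 4\sqrt{2\pi}\,d\bigr]|x|^2.$$

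Finally, by \prettyref{cor:Neumann} one has $|C|_{L_{1,\nu}(\mathbb R_{\geq0};L(H_0))}\to 0$ as $\nu\to\infty$, so the bracket on the right tends to $+\infty$. Picking $\nu_1\geq\max\{\nu_0,\tfrac{1}{2r}\}$ large enough that the bracket exceeds some fixed $c>0$, and setting $r_1:=\tfrac{1}{2\nu_1}$, yields the claim, with $r_1\leq r$ by construction. The only non-trivial step is the third-term estimate, which rests crucially on \prettyref{lem:one_nu_every_nu} to lift the single-weight hypothesis \prettyref{eq:imaginary} to a uniform bound across all large $\nu$; once that is available, the smallness of $|C|_{L_{1,\nu}}$ for large $\nu$ easily absorbs the constant $4\sqrt{2\pi}d$.
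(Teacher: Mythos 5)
Your proposal is correct and follows essentially the same route as the paper: the same decomposition $\Re((\i t+\nu)w)=\nu\Re w-t\Im w$, the same use of Lemma \ref{lem:one_nu_every_nu} to bound the imaginary-part term by $4\sqrt{2\pi}\,d$, and the same appeal to Corollary \ref{cor:Neumann} to absorb that constant for large $\nu$. Your write-up is in fact slightly more explicit than the paper's about the choice of $r_1$ relative to $\nu_0$ and $r$.
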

\begin{proof}
Let $z\in B_{\mathbb{C}}(r_{1},r_{1})$, where we will choose $r_{1}$
later on. Using the representation $z^{-1}=\i t+\nu$ for $t\in\mathbb{R}$
and $\nu>\frac{1}{2r_{1}}$ we estimate 
\begin{align*}
\Re\left(\i t+\nu\right)\left(1+\sqrt{2\pi}\hat{C}(t-\i\nu)\right) & =\nu\left(1+\sqrt{2\pi}\Re\hat{C}(t-\i\nu)\right)-\sqrt{2\pi}t\Im\hat{C}(t-\i\nu)\\
 & \geq\nu\left(1-|C|_{L_{1,\nu}(\mathbb{R}_{\geq0};L(H))}\right)-4\sqrt{2\pi}d,
\end{align*}
 where we have used \prettyref{lem:one_nu_every_nu}. Using \prettyref{cor:Neumann},
this yields the assertion.\end{proof}
\begin{prop}
\label{prop:pos_B}Let $B$ satisfy the hypotheses \prettyref{eq:selfadjoint}-\prettyref{eq:imaginary}.
Then there exists $0<r_{1}\leq r$ and $c>0$ such that for all $z\in B_{\mathbb{C}}(r_{1},r_{1})$
the condition \prettyref{eq:solv_B} is satisfied.\end{prop}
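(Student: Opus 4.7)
The plan is to parameterize $z \in B_{\mathbb{C}}(r_{1},r_{1})$ via $z^{-1} = \i t + \nu$ with $t \in \mathbb{R}$ and $\nu > \tfrac{1}{2r_{1}}$, and to reduce the operator estimate to a pointwise scalar one by exploiting the commutativity provided by hypothesis \prettyref{eq:commute}. More precisely, by \prettyref{eq:selfadjoint} and \prettyref{eq:commute} the operators $X \coloneqq \Re \hat B(t - \i\nu)$ and $Y \coloneqq \Im \hat B(t - \i\nu)$ are commuting selfadjoint elements of the commutative $C^{*}$-algebra generated by $\{B(s) \mid s\geq 0\}$ (approximate the defining Bochner integrals by Riemann sums and pass to the limit), so all subsequent algebra may be carried out by joint functional calculus.

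Using $(a + \i b)^{-1} = (a^{2} + b^{2})^{-1}(a - \i b)$ for commuting selfadjoint $a,b$ with $a^{2} + b^{2}$ invertible, one obtains
\begin{equation*}
(1 - \sqrt{2\pi}\hat B(t - \i\nu))^{-1} = \bigl((1 - \sqrt{2\pi}X)^{2} + 2\pi Y^{2}\bigr)^{-1}\bigl((1 - \sqrt{2\pi}X) + \i\sqrt{2\pi}Y\bigr),
\end{equation*}
which, after multiplication by $\i t + \nu$ and taking real parts, becomes
\begin{equation*}
\Re (\i t + \nu)(1 - \sqrt{2\pi}\hat B(t - \i\nu))^{-1} = \bigl(\nu(1 - \sqrt{2\pi}X) - \sqrt{2\pi}\,tY\bigr)\bigl((1 - \sqrt{2\pi}X)^{2} + 2\pi Y^{2}\bigr)^{-1}.
\end{equation*}

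Now \prettyref{lem:one_nu_every_nu}, applicable thanks to \prettyref{eq:selfadjoint} and \prettyref{eq:imaginary}, yields the operator inequality $tY = t\,\Im\hat B(t - \i\nu) \leq 4d$ for every $\nu \geq \nu_{0}$, while \prettyref{cor:Neumann} gives $\|X\|, \|Y\| \leq \|\sqrt{2\pi}\,\hat B(t - \i\nu)\| \leq |B|_{L_{1,\nu}(\mathbb{R}_{\geq 0};L(H_{1}))} \to 0$ as $\nu \to \infty$. Choosing $\nu$ large enough that $|B|_{L_{1,\nu}} < 1$ and $\nu(1 - |B|_{L_{1,\nu}}) - 4\sqrt{2\pi}\,d > 0$, the pointwise estimate on the joint spectrum of $X$ and $Y$ bounds the numerator below by $\nu(1 - |B|_{L_{1,\nu}}) - 4\sqrt{2\pi}\,d$ and the denominator above by $(1 + |B|_{L_{1,\nu}})^{2} + |B|_{L_{1,\nu}}^{2}$, so the quotient is bounded below by a positive constant $c$. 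Setting $r_{1} \coloneqq \tfrac{1}{2\nu}$ then concludes the proof.

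The only delicate step is the reduction to the pointwise estimate, for which hypothesis \prettyref{eq:commute} is essential (and accordingly absent from \prettyref{prop:pos_C}, where $M_{0}$ is linear in $\hat C$): one needs $X$ and $Y$ to commute so that both factors above can be diagonalised simultaneously and the inverse handled coordinatewise. Once this is in hand, the remainder of the argument is a routine algebraic manipulation combined with \prettyref{lem:one_nu_every_nu} and \prettyref{cor:Neumann}.
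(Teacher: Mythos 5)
Your proposal is correct and follows essentially the same route as the paper: both exploit hypothesis \prettyref{eq:commute} to make $\hat{B}(t-\i\nu)$ normal, write the inverse as $\bigl((1-\sqrt{2\pi}\hat{B})^{\ast}(1-\sqrt{2\pi}\hat{B})\bigr)^{-1}(1-\sqrt{2\pi}\hat{B})^{\ast}$, and then combine \prettyref{lem:one_nu_every_nu} (to control the $t\,\Im\hat{B}$ term) with \prettyref{cor:Neumann}. The only cosmetic difference is that you carry out the final estimate pointwise on the joint spectrum via functional calculus, whereas the paper tests against the vector $Dx$ with $D=|1-\sqrt{2\pi}\hat{B}(t-\i\nu)|^{-1}$ and uses $|x|\leq(1+|B|_{L_{1,\nu_{0}}})|Dx|$; the resulting constants differ slightly but both suffice.
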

\begin{proof}
Let $x\in H_{1}$ and choose $r_{1}<\min\left\{ \frac{1}{2\nu_{0}},r,\frac{1-|B|_{L_{1,\nu_{0}}(\mathbb{R}_{>0};L(H_{1}))}}{8d\sqrt{2\pi}}\right\} .$
Let $z\in B_{\mathbb{C}}(r_{1},r_{1})$. Then $z^{-1}=\i t+\nu$ for
some $t\in\mathbb{R},\nu>\frac{1}{2r_{1}}$. Since the operator $1-\sqrt{2\pi}\hat{B}(t-\i\nu)$
is bounded and boundedly invertible, so is its adjoint, which is given
by $1-\sqrt{2\pi}\hat{B}(-t-\i\nu)$ since $B(s)$ is selfadjoint
for each $s\in\mathbb{R}.$ We compute 
\[
\Re\langle(\i t+\nu)(1-\sqrt{2\pi}\hat{B}(t-\i\nu))^{-1}x|x\rangle=\Re\langle(\i t+\nu)(|1-\sqrt{2\pi}\hat{B}(t-\i\nu)|^{2})^{-1}(1-\sqrt{2\pi}\hat{B}(-t-\i\nu))x|x\rangle.
\]
We define the operator $D\coloneqq|1-\sqrt{2\pi}\hat{B}(t-\i\nu)|{}^{-1}$.
Furthermore, note that due to \prettyref{eq:commute}, we have that
the operators $\hat{B}(\cdot)$ commute. This especially implies,
that $\hat{B}(t-\i\nu)$ is normal and hence $D$ and $1-\sqrt{2\pi}\hat{B}(-t-\i\nu)$
commute. Thus, we can estimate the real part by 
\begin{align*}
 & \quad\Re\langle(\i t+\nu)D^{2}(1-\sqrt{2\pi}\hat{B}(-t-\i\nu))x|x\rangle\\
 & =\Re(-\i t+\nu)\langle(1-\sqrt{2\pi}\hat{B}(-t-\i\nu))Dx|Dx\rangle\\
 & =\nu\langle\left(1-\sqrt{2\pi}\Re\hat{B}(-t-\i\nu)\right)Dx|Dx\rangle+t\langle\sqrt{2\pi}\Im\hat{B}(-t-\i\nu)Dx|Dx\rangle\\
 & \geq\nu\left(1-\|\sqrt{2\pi}\hat{B}(-t-\i\nu)\|\right)|Dx|^{2}-\sqrt{2\pi}\langle t\Im\hat{B}(t-\i\nu)Dx|Dx\rangle\\
 & \geq\left(\nu(1-|B|_{L_{1,\nu}(\mathbb{R}_{\geq0};L(H_{1}))})-\sqrt{2\pi}4d\right)|Dx|^{2}\\
 & \geq\left(\nu(1-|B|_{L_{1,\nu_{0}}(\mathbb{R}_{\geq0};L(H_{1}))})-\sqrt{2\pi}4d\right)|Dx|^{2},
\end{align*}
where we have used \prettyref{lem:one_nu_every_nu}. Using now the
inequality 
\[
|x|=|D^{-1}Dx|=\left|\left(1-\sqrt{2\pi}\hat{B}(t-\i\nu)\right)Dx\right|\leq(1+|B|_{L_{1,\nu_{0}}(\mathbb{R}_{\geq0};L(H_{1}))})|Dx|
\]
we arrive at 
\begin{align*}
\Re\langle(\i t+\nu)D^{2}(1-\sqrt{2\pi}\hat{B}(-t-\i\nu))x|x\rangle & \geq\frac{\nu(1-|B|_{L_{1,\nu_{0}}(\mathbb{R}_{\geq0};L(H_{1}))})-\sqrt{2\pi}4d}{\left(1+|B|_{L_{1,\nu_{0}}(\mathbb{R}_{\geq0};L(H_{1}))}\right)^{2}}|x|^{2}
\end{align*}
which shows the assertion since $\nu>\frac{1}{2r_{1}}$.
\end{proof}
In applications it turns out that \prettyref{eq:hyper-1} is just
assumed to hold for positive times, i.e. on $\mathbb{R}_{>0}$ and
the equation is completed by initial conditions. So for instance,
we can require that the unknowns $v$ and $q$ are supported on the
positive real line and attain some given initial values at time $0.$
Then we arrive at a usual initial value problem. Since, due to the
convolution with $B$ and $C$ the history of $v$ and $q$ has an
influence on the equation for positive times, we can, instead of requiring
an initial value at $0$, prescribe the values of $v$ and $q$ on
the whole negative real-line. This is a standard problem in delay-equations
and it is usually treated by introducing so-called history-spaces
(see e.g. \cite{hale1993introduction,diekmann1995delay}). However,
following the idea of \cite{Kalauch2011} we can treat this kind of
equations as a problem of the form \prettyref{eq:hyper-1} with a
modified right-hand side. Since we have to invoke the theory of Sobolev-chains
in order to deal with such problems, we assume that $A:D(A)\subseteq H\to H$
is linear and maximal monotone. Let us treat the case of classical
initial value problems first. 
\begin{rem}[Initial value problem]
 \label{rem:ivp}For $\left(f,g\right)\in H_{\nu,0}(\mathbb{R};H)$
with $\supp f,\supp g\subseteq\mathbb{R}_{\geq0}$ we consider the
differential equation \foreignlanguage{english}{
\[
\left(\partial_{0}\left(\begin{array}{cc}
1+C\ast & 0\\
0 & (1-B\ast)^{-1}
\end{array}\right)+A\right)\left(\begin{array}{c}
v\\
q
\end{array}\right)=\left(\begin{array}{c}
f\\
g
\end{array}\right)
\]
}on $\mathbb{R}_{>0}$ completed by initial conditions of the form
\[
v(0+)=v^{(0)}\mbox{ and }q(0+)=q^{(0)},
\]
where we assume $(v^{(0)},q^{(0)})\in D(A)$. We assume that the solvability
conditions \prettyref{eq:solv_B} and \prettyref{eq:solv_C} are fulfilled.
Assume that a pair $(v,q)\in\chi_{\mathbb{R}_{>0}}(m_{0})[H_{\nu,1}(\mathbb{R};H)]$%
\footnote{This means that we find a pair $(w,p)\in H_{\nu,1}(\mathbb{R};H)$
that coincides with $(v,q)$ for positive times.%
} solves this problem. Then we get 
\[
\partial_{0}\left(\begin{array}{cc}
1+C\ast & 0\\
0 & (1-B\ast)^{-1}
\end{array}\right)\left(\begin{array}{c}
v-\chi_{\mathbb{R}_{>0}}\otimes v^{(0)}\\
q-\chi_{\mathbb{R}_{>0}}\otimes q^{(0)}
\end{array}\right)+A\left(\begin{array}{c}
v\\
q
\end{array}\right)=\left(\begin{array}{c}
f\\
g
\end{array}\right)
\]
on $\mathbb{R},$ which is equivalent to 
\begin{equation}
\left(\partial_{0}\left(\begin{array}{cc}
1+C\ast & 0\\
0 & (1-B\ast)^{-1}
\end{array}\right)+A\right)\left(\begin{array}{c}
v\\
q
\end{array}\right)=\left(\begin{array}{c}
f\\
g
\end{array}\right)+\left(\begin{array}{cc}
1+C\ast & 0\\
0 & (1-B\ast)^{-1}
\end{array}\right)\delta\otimes\left(\begin{array}{c}
v^{(0)}\\
q^{(0)}
\end{array}\right).\label{eq:ivp}
\end{equation}
We note that $\delta\in H_{\nu,-1}(\mathbb{R})$ and according to
\prettyref{rem:conv_sobolev}, the operators $1+C\ast$ and $(1-B\ast)^{-1}$
have a continuous extension to $H_{\nu,-1}(\mathbb{R};H).$ We claim
that \prettyref{eq:ivp} is the proper formulation of the initial
value problem in our framework. According to \prettyref{cor:sol_extrapolation}
this equation admits a unique solution $(v,q)\in H_{\nu,-1}(\mathbb{R};H)$
and due to the causality of the solution operator we get $\supp v,\supp q\subseteq\mathbb{R}_{\geq0}$.
We derive from \prettyref{eq:ivp} that
\[
\left(\partial_{0}\left(\begin{array}{cc}
1+C\ast & 0\\
0 & (1-B\ast)^{-1}
\end{array}\right)+A\right)\left(\begin{array}{c}
v-\chi_{\mathbb{R}_{>0}}\otimes v^{(0)}\\
q-\chi_{\mathbb{R}_{>0}}\otimes q^{(0)}
\end{array}\right)=\left(\begin{array}{c}
f\\
g
\end{array}\right)-\chi_{\mathbb{R}_{>0}}\otimes A\left(\begin{array}{c}
v^{(0)}\\
q^{(0)}
\end{array}\right)
\]
which gives $\left(\begin{array}{c}
v-\chi_{\mathbb{R}_{>0}}\otimes v^{(0)}\\
q-\chi_{\mathbb{R}_{>0}}\otimes q^{(0)}
\end{array}\right)\in H_{\nu,0}(\mathbb{R};H).$ However, we also get that 
\begin{align*}
 & \partial_{0}\left(\begin{array}{cc}
1+C\ast & 0\\
0 & (1-B\ast)^{-1}
\end{array}\right)\left(\begin{array}{c}
v-\chi_{\mathbb{R}_{>0}}\otimes v^{(0)}\\
q-\chi_{\mathbb{R}_{>0}}\otimes q^{(0)}
\end{array}\right)\\
= & \left(\begin{array}{c}
f\\
g
\end{array}\right)-A\left(\begin{array}{c}
v\\
q
\end{array}\right)\in H_{\nu,0}(\mathbb{R};H_{-1}(A+\i)),
\end{align*}
and hence, 
\[
\left(\begin{array}{c}
v-\chi_{\mathbb{R}_{>0}}\otimes v^{(0)}\\
q-\chi_{\mathbb{R}_{>0}}\otimes q^{(0)}
\end{array}\right)\in H_{\nu,1}(\mathbb{R};H_{-1}(A+1)).
\]
Using the Sobolev-embedding Theorem (see \cite[Lemma 3.1.59]{Picard_McGhee}
or \cite[Lemma 5.2]{Kalauch2011}) we obtain that \foreignlanguage{english}{$\left(\begin{array}{c}
v-\chi_{\mathbb{R}_{>0}}\otimes v^{(0)}\\
q-\chi_{\mathbb{R}_{>0}}\otimes q^{(0)}
\end{array}\right)$} is continuous with values in $H_{-1}(A+1)$ and hence, due to causality,
\[
0=\left(\begin{array}{c}
v-\chi_{\mathbb{R}_{>0}}\otimes v^{(0)}\\
q-\chi_{\mathbb{R}_{>0}}\otimes q^{(0)}
\end{array}\right)(0-)=\left(\begin{array}{c}
v-\chi_{\mathbb{R}_{>0}}\otimes v^{(0)}\\
q-\chi_{\mathbb{R}_{>0}}\otimes q^{(0)}
\end{array}\right)(0+)
\]
in $H_{-1}(A+1)$ and thus 
\[
\left(\begin{array}{c}
v\\
q
\end{array}\right)(0+)=\left(\begin{array}{c}
v^{(0)}\\
q^{(0)}
\end{array}\right)\mbox{ in }H_{-1}(A+1).
\]

\end{rem}
$\,$
\begin{rem}[Problems with prescribed history]
\label{rem: history} For $\left(f,g\right)\in H_{\nu,0}(\mathbb{R};H)$
with $\supp f,\supp g\subseteq\mathbb{R}_{\geq0}$ we again consider
the equation 
\begin{equation}
\left(\partial_{0}\left(\begin{array}{cc}
1+C\ast & 0\\
0 & (1-B\ast)^{-1}
\end{array}\right)+A\right)\left(\begin{array}{c}
v\\
q
\end{array}\right)=\left(\begin{array}{c}
f\\
g
\end{array}\right)\label{eq:hyper_again}
\end{equation}
on $\mathbb{R}_{>0}$ and the initial conditions 
\[
v|_{\mathbb{R}_{<0}}=v_{(-\infty)},\: v(0+)=v_{(-\infty)}(0-)\mbox{ and }q|_{\mathbb{R}_{<0}}=q_{(-\infty)},\: q(0+)=q_{(-\infty)}(0-).
\]
We assume that $v_{(-\infty)}\in H_{\nu,0}(\mathbb{R};H_{0})$ with
$\supp v_{(-\infty)}\subseteq\mathbb{R}_{\leq0}$ and $\left(1+C\ast\right)v_{(-\infty)}\in\chi_{\mathbb{R}_{>0}}(m_{0})[H_{\nu,1}(\mathbb{R};H_{0})]$
as well as $q{}_{(-\infty)}\in H_{\nu,0}(\mathbb{R};H_{1})$ with
$\supp q_{(-\infty)}\subseteq\mathbb{R}_{\leq0}$, and $\left(1-B\ast\right)^{-1}q_{(-\infty)}\in\chi_{\mathbb{R}_{>0}}(m_{0})[H_{\nu,1}(\mathbb{R};H_{1})]$.
Moreover, we assume $\left(v_{(-\infty)}(0-),q_{(-\infty)}(0-)\right)\in D(A)$.
We want to determine an evolutionary equation for $w\coloneqq\chi_{\mathbb{R}_{>0}}v$
and $p\coloneqq\chi_{\mathbb{R}_{>0}}q.$ We have that 
\begin{align}
\left(\begin{array}{c}
f\\
g
\end{array}\right) & =\chi_{\mathbb{R}_{>0}}\left(\partial_{0}\left(\begin{array}{cc}
1+C\ast & 0\\
0 & (1-B\ast)^{-1}
\end{array}\right)+A\right)\left(\begin{array}{c}
v\\
q
\end{array}\right)\nonumber \\
 & =\chi_{\mathbb{R}_{>0}}\left(\partial_{0}\left(\begin{array}{cc}
1+C\ast & 0\\
0 & (1-B\ast)^{-1}
\end{array}\right)+A\right)\left(\begin{array}{c}
w+v_{(-\infty)}\\
p+q_{(-\infty)}
\end{array}\right)\nonumber \\
 & =\chi_{\mathbb{R}_{>0}}\left(\partial_{0}\left(\begin{array}{cc}
1+C\ast & 0\\
0 & (1-B\ast)^{-1}
\end{array}\right)+A\right)\left(\begin{array}{c}
w\\
p
\end{array}\right)\nonumber \\
 & \quad+\chi_{\mathbb{R}_{>0}}\partial_{0}\mbox{\ensuremath{\left(\begin{array}{c}
(1+C\ast)v{}_{(-\infty)}\\
(1-B\ast)^{-1}q_{(-\infty)}
\end{array}\right)}}.\label{eq:comp}
\end{align}
Hence, we arrive at the following equation for $(w,p):$ 
\begin{align*}
\chi_{\mathbb{R}_{>0}}\left(\partial_{0}\left(\begin{array}{cc}
1+C\ast & 0\\
0 & (1-B\ast)^{-1}
\end{array}\right)+A\right)\left(\begin{array}{c}
w\\
p
\end{array}\right) & =\left(\begin{array}{c}
f\\
g
\end{array}\right)-\chi_{\mathbb{R}_{>0}}\partial_{0}\mbox{\ensuremath{\left(\begin{array}{c}
(1+C\ast)v{}_{(-\infty)}\\
(1-B\ast)^{-1}q_{(-\infty)}
\end{array}\right)}}.
\end{align*}
Note that we can omit the cut-off function on the left hand side due
to the causality of the operators. The conditions $v(0+)=v_{(-\infty)}(0-)$
and $q(0+)=q_{(-\infty)}(0-)$ are now classical initial conditions
for the unknowns $w$ and $p$. Hence, following \prettyref{rem:ivp}
we end up with the following evolutionary equation for $(w,p)$:
\begin{align}
\left(\partial_{0}\left(\begin{array}{cc}
1+C\ast & 0\\
0 & (1-B\ast)^{-1}
\end{array}\right)+A\right)\left(\begin{array}{c}
w\\
p
\end{array}\right) & =\left(\begin{array}{c}
f\\
g
\end{array}\right)-\chi_{\mathbb{R}_{>0}}\partial_{0}\mbox{\ensuremath{\left(\begin{array}{c}
(1+C\ast)v{}_{(-\infty)}\\
(1-B\ast)^{-1}q_{(-\infty)}
\end{array}\right)}}+\nonumber \\
 & +\left(\begin{array}{cc}
1+C\ast & 0\\
0 & (1-B\ast)^{-1}
\end{array}\right)\delta\otimes\left(\begin{array}{c}
v_{(-\infty)}(0-)\\
q_{(-\infty)}(0-)
\end{array}\right).\label{eq:pre_history}
\end{align}
This equation possesses a unique solution in $H_{\nu,-1}(\mathbb{R};H)$
with $\supp w,\supp p\subseteq\mathbb{R}_{>0}$ due to the causality
of the solution operator. Like in \prettyref{rem:ivp} we get that
\[
\left(\begin{array}{c}
w-\chi_{\mathbb{R}_{>0}}\otimes v_{(-\infty)}(0-)\\
p-\chi_{\mathbb{R}_{>0}}\otimes q_{(-\infty)}(0-)
\end{array}\right)\in H_{\nu,0}(\mathbb{R};H)
\]
from which we derive, using Equation \prettyref{eq:pre_history},
that 
\[
\left(\begin{array}{c}
w(0+)\\
p(0+)
\end{array}\right)=\left(\begin{array}{c}
v_{(-\infty)}(0-)\\
q_{(-\infty)}(0-)
\end{array}\right)
\]
in $H_{-1}(A+1)$. We are now able to define the original solution
by setting 
\[
v\coloneqq w+v_{(-\infty)}\in H_{\nu,0}(\mathbb{R};H_{0})\mbox{ and }q=p+q_{(-\infty)}\in H_{\nu,0}(\mathbb{R};H_{1}).
\]
Indeed, $v$ and $q$ satisfy the initial conditions by definition
and on $\mathbb{R}_{>0}$ the solution $(v,q)$ satisfies the differential
equation \prettyref{eq:hyper_again} according to the computation
done in \prettyref{eq:comp}.
\end{rem}

\subsection{Material laws for parabolic-type problems}

For parabolic-type problems it turns out that the material law is
typically given as a combination of terms of the form $M_{0}$ or
$M_{1}$, which were discussed in the previous section and terms of
the form 
\[
M_{2}(z)=z\left(1+\sqrt{2\pi}\hat{C}(-\i z^{-1})\right)\quad(z\in B_{\mathbb{C}}(r,r))
\]
and 
\[
M_{3}(z)=z\left(1-\sqrt{2\pi}\hat{B}(-\i z^{-1})\right)^{-1}\quad(z\in B_{\mathbb{C}}(r,r)),
\]
where $B\in L_{1,\mu}(\mathbb{R}_{\geq0};L(H_{1}))$ and $C\in L_{1,\mu}(\mathbb{R}_{\geq0};L(H_{0}))$
for some $\mu\geq0$ and $r>0$ is small enough, such that $M_{3}$
is a linear material law.\\
Frequently we find material laws given by 
\[
M(z)=\left(\begin{array}{cc}
M_{0}(z) & 0\\
0 & M_{3}(z)
\end{array}\right).
\]
The corresponding integro-differential inclusion is then given by
\begin{equation}
\left(\left(\begin{array}{c}
u\\
q
\end{array}\right),\left(\begin{array}{c}
f\\
g
\end{array}\right)\right)\in\partial_{0}\left(\begin{array}{cc}
1+C\ast & 0\\
0 & 0
\end{array}\right)+\left(\begin{array}{cc}
0 & 0\\
0 & \left(1-B\ast\right)^{-1}
\end{array}\right)+A.\label{eq:parabolic}
\end{equation}
Again, in the special case, when $g=0$ and $A$ is of the form $\left(\begin{array}{cc}
0 & G^{\ast}\\
-G & 0
\end{array}\right)$ for some closed densely defined linear operator $G:D(G)\subseteq H_{0}\to H_{1}$
this yields the parabolic equation 
\begin{equation}
\partial_{0}u+C\ast\partial_{0}u+G^{\ast}Gu-G^{\ast}\left(B\ast Gu\right)=f,\label{eq:parabolic-2}
\end{equation}
showing that problems of the form \prettyref{eq:parabolic-1} are
covered by \prettyref{eq:parabolic}. Such problems were considered
in \cite{Cannarsa2003} for scalar-valued kernels, where besides the
well-posedness the asymptotic behaviour was addressed. As it turns
out the solution theory for this kind of problem is quite easy in
comparison to the solution theory for the hyperbolic case.
\begin{prop}
\label{prop:pos_def_parabolic}There exist $0<r_{1}\leq r$ and $c>0$
such that the material laws $M_{2}$ and $M_{3}$ satisfy the solvability
condition \prettyref{eq:solv}.\end{prop}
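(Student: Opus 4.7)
The plan is to exploit the fact that the $z^{-1}$ factor present in both $M_2$ and $M_3$ exactly cancels the explicit $z$ in their definitions, so that $z^{-1}M_2(z)=1+\sqrt{2\pi}\hat{C}(-\i z^{-1})$ and $z^{-1}M_3(z)=(1-\sqrt{2\pi}\hat{B}(-\i z^{-1}))^{-1}$ are mere perturbations of the identity. Consequently, no structural hypotheses on $B$ and $C$ are needed: the result will follow just from the uniform bound $\|\hat{T}(t-\i\nu)\|\leq\frac{1}{\sqrt{2\pi}}|T|_{L_{1,\nu}(\mathbb{R}_{\geq 0};L(H))}$ combined with \prettyref{cor:Neumann}. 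Throughout I will parametrize $z\in B_{\mathbb{C}}(r_1,r_1)$ by $z^{-1}=\i t+\nu$ with $t\in\mathbb{R}$ and $\nu>\frac{1}{2r_1}$.

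For $M_2$, I would estimate, for any $x\in H_0$,
\[
\Re\langle z^{-1}M_2(z)x|x\rangle = |x|^2 + \sqrt{2\pi}\,\Re\langle\hat{C}(t-\i\nu)x|x\rangle \geq \bigl(1-|C|_{L_{1,\nu}(\mathbb{R}_{\geq 0};L(H_0))}\bigr)|x|^2.
\]
Since $|C|_{L_{1,\nu}}\to 0$ by \prettyref{cor:Neumann}, I can fix $\nu_1\geq\mu$ large enough so that $|C|_{L_{1,\nu_1}}\leq\frac{1}{2}$.

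For $M_3$, I first enlarge $\nu_1$ if needed so that also $|B|_{L_{1,\nu_1}}\leq\frac{1}{3}$, which ensures that $\sqrt{2\pi}\hat{B}(t-\i\nu)$ has operator norm strictly less than one for all $t\in\mathbb{R}$ and $\nu\geq\nu_1$; in particular $(1-\sqrt{2\pi}\hat{B}(t-\i\nu))^{-1}$ is well defined. Writing $T\coloneqq\sqrt{2\pi}\hat{B}(t-\i\nu)$ and using the identity $(1-T)^{-1}=I+T(1-T)^{-1}$, I obtain
\[
\Re\langle(1-T)^{-1}x|x\rangle \geq |x|^2 - \|T(1-T)^{-1}\|\,|x|^2 \geq \left(1-\frac{|B|_{L_{1,\nu}}}{1-|B|_{L_{1,\nu}}}\right)|x|^2,
\]
and by the choice of $\nu_1$ the bracket is bounded below by $\frac{1}{2}$.

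Finally I set $r_1\coloneqq\min\{r,\frac{1}{2\nu_1}\}$ and $c\coloneqq\frac{1}{2}$; then for every $z\in B_{\mathbb{C}}(r_1,r_1)$ the representation $z^{-1}=\i t+\nu$ forces $\nu>\frac{1}{2r_1}\geq\nu_1$, and both estimates apply. I do not anticipate a genuine obstacle here: the entire argument is a Neumann-type perturbation of the identity, and the technical hypotheses \prettyref{eq:selfadjoint}--\prettyref{eq:imaginary} that were indispensable in the hyperbolic case are not needed, precisely because the troublesome unbounded factor $\nu$ arising from $\Re z^{-1}=\nu$ is absent here.
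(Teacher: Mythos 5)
Your proof is correct and follows essentially the same route as the paper: in both cases the factor $z$ in $M_2,M_3$ cancels the $z^{-1}$ in \prettyref{eq:solv}, and the result is obtained by treating $\sqrt{2\pi}\hat C$ and $\sqrt{2\pi}\hat B$ as small perturbations of the identity via the uniform bound $\|\sqrt{2\pi}\hat T(t-\i\nu)\|\leq|T|_{L_{1,\nu}(\mathbb{R}_{\geq0};L(H))}$, \prettyref{cor:Neumann}, and the Neumann-series identity $(1-T)^{-1}=1+T(1-T)^{-1}$. Your observation that the hypotheses \prettyref{eq:selfadjoint}--\prettyref{eq:imaginary} are not needed here, precisely because the unbounded factor $\nu=\Re z^{-1}$ does not appear, matches the paper's treatment.
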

\begin{proof}
Using \prettyref{cor:Neumann} we estimate for $M_{2}$ 
\[
\Re(1+\sqrt{2\pi}\hat{C}(-\i z^{-1}))\geq1-\sup_{z\in B_{\mathbb{C}}(r,r)}\|\sqrt{2\pi}\hat{C}(-\i z^{-1})\|\to1\quad(r\to0+).
\]
For $M_{3}$ we use the Neumann-series and estimate 
\begin{align*}
\Re\left(1-\sqrt{2\pi}\hat{B}(-\i z^{-1})\right)^{-1} & =1+\Re\sqrt{2\pi}\hat{B}(-\i z^{-1})\sum_{k=0}^{\infty}\left(\sqrt{2\pi}\hat{B}(-\i z^{-1})\right)^{k}\\
 & \geq1-\frac{\sup_{z\in B_{\mathbb{C}}(r,r)}\|\sqrt{2\pi}\hat{B}(-\i z^{-1})\|_{L(H_{1})}}{1-\sup_{z\in B_{\mathbb{C}}(r,r)}\|\sqrt{2\pi}\hat{B}(-\i z^{-1})\|_{L(H_{1})}}\\
 & \to1\quad(r\to0+).\tag*{\qedhere}
\end{align*}
\end{proof}
\begin{rem}
\label{rem:vlasov} In \cite{Vlasov2010} the following kind of a
parabolic-type integro-differential equation was considered: 
\begin{equation}
\partial_{0}u+C\ast\partial_{0}u+Lu-B\ast Lu=f,\label{eq:parabolic_vlasov}
\end{equation}
where $L:D(L)\subseteq H_{0}\to H_{0}$ is a selfadjoint strictly
positive definite operator and $B,C\in L_{1,\mu}(\mathbb{R}_{\geq0};L(H_{0}))$.
It is remarkable that no further assumptions on the kernel $C$ are
imposed, although \prettyref{eq:parabolic_vlasov} seems to be of
the form \prettyref{eq:parabolic-2}, where $C$ would have to verify
the hypotheses \prettyref{eq:selfadjoint} and \prettyref{eq:imaginary}.
The reason for that is that \prettyref{eq:parabolic_vlasov} can be
rewritten as a parabolic system, which is not of the classical form
\prettyref{eq:parabolic}. Indeed, instead of \prettyref{eq:parabolic_vlasov}
we consider 
\[
\partial_{0}u+(1+C\ast)^{-1}(1-B\ast)Lu=(1+C\ast)^{-1}f.
\]
In this case the well-posedness can be shown, without imposing additional
hypotheses on $C.$ First we write the problem in the form given in
\prettyref{eq:evol}. For doing so, consider the operator $L:D(L)\subseteq H_{1}(\sqrt{L})\to H_{0}.$
Recall that $H_{1}(\sqrt{L})$ is the domain of $\sqrt{L}$ equipped
with the inner product $(u,v)\mapsto\langle\sqrt{L}u|\sqrt{L}v\rangle.$
We compute the adjoint of this operator. First observe that for $g\in H_{1}(\sqrt{L})$
we get 
\begin{align*}
\langle g|Lf\rangle_{H_{0}} & =\langle\sqrt{L}g|\sqrt{L}f\rangle_{H_{0}}\\
 & =\langle g|f\rangle_{H_{1}(\sqrt{L})}
\end{align*}
for each $f\in D(L)$ and thus $g\in D(L^{\ast}).$ Furthermore, if
$g\in D(L^{\ast})$ there exists $h\in H_{1}(\sqrt{L})$ such that
for all $f\in D(L)$
\[
\langle g|Lf\rangle_{H_{0}}=\langle h|f\rangle_{H_{1}(\sqrt{L})}=\langle\sqrt{L}h|\sqrt{L}f\rangle_{H_{0}}=\langle h|Lf\rangle_{H_{0}}.
\]
Since $L$ has dense range we conclude that $g=h\in H_{1}(\sqrt{L}).$
Thus, the adjoint is given by the identity $1:H_{1}(\sqrt{L})\subseteq H_{0}\to H_{1}(\sqrt{L}).$
We rewrite Equation \prettyref{eq:parabolic_vlasov} in the following
way 
\[
\left(\partial_{0}\left(\begin{array}{cc}
1 & 0\\
0 & 0
\end{array}\right)+\left(\begin{array}{cc}
0 & 0\\
0 & (1-B\ast)^{-1}(1+C\ast)
\end{array}\right)+\left(\begin{array}{cc}
0 & 1\\
-L & 0
\end{array}\right)\right)\left(\begin{array}{c}
u\\
q
\end{array}\right)=\left(\begin{array}{c}
(1+C\ast)^{-1}f\\
0
\end{array}\right).
\]
Note that this is now an equation in the space $H_{\nu,0}(\mathbb{R};H_{1}(\sqrt{L})\oplus H_{0}).$
The strict positive definiteness of $\Re(1-B\ast)^{-1}(1+C\ast)$
follows from the strict positive definiteness of $\Re(1-B\ast)^{-1}$
(see \prettyref{prop:pos_def_parabolic}) and the fact that $\|(1-B\ast)^{-1}C\ast\|_{L(H_{\nu,0}(\mathbb{R};H_{0}))}\to0$
as $\nu\to\infty.$
\end{rem}

\section{Examples}

In this section we apply our findings of Subsection 3.1 to two concrete
examples. In the first example we deal with the equations of visco-elasticity,
while in the second example a class integro-differential inclusions
arising in the theory of phase transition with superheating and supercooling
effects (see \cite{Colli1993}) is addressed.

\subsection{Visco-Elasticity}

Throughout let $\Omega\subseteq\mathbb{R}^{3}$ be an arbitrary domain.
We begin by defining the Hilbert spaces and operators involved.
\begin{defn}
We consider the space 
\[
L_{2}(\Omega)^{3\times3}\coloneqq\left\{ \Psi=(\Psi_{ij})_{i,j\in\{1,2,3\}}\,|\,\forall i,j\in\{1,2,3\}:\Psi_{ij}\in L_{2}(\Omega)\right\} 
\]
equipped with the inner product 
\[
\langle\Psi|\Phi\rangle\coloneqq\intop_{\Omega}\trace(\Psi(x)^{\ast}\Phi(x))\mbox{ d}x\quad(\Psi,\Phi\in L_{2}(\Omega)^{3\times3}).
\]
It is obvious that $L_{2}(\Omega)^{3\times3}$ becomes a Hilbert space
and that 
\[
L_{2,\mathrm{sym}}(\Omega)\coloneqq\left\{ \Psi\in L_{2}(\Omega)^{3\times3}\,|\,\Psi(x)^{T}=\Psi(x)\quad(x\in\Omega\mbox{ a.e.})\right\} 
\]
defines a closed subspace of $L_{2}(\Omega)^{3\times3}$ and therefore
$L_{2,\mathrm{sym}}(\Omega)$ is also a Hilbert space. We introduce
the operator 
\begin{align*}
\Grad|_{C_{c}^{\infty}(\Omega)^{3}}:C_{c}^{\infty}(\Omega)^{3}\subseteq L_{2}(\Omega)^{3} & \to L_{2,\mathrm{sym}}(\Omega)\\
(\phi_{i})_{i\in\{1,2,3\}} & \mapsto\left(\frac{1}{2}(\partial_{i}\phi_{j}+\partial_{j}\phi_{i})\right)_{i,j\in\{1,2,3\}},
\end{align*}
which turns out to be closable and we denote its closure by $\Grad_{c}.$
Moreover we define $\Div\coloneqq-\Grad_{c}^{\ast}.$
\end{defn}
For $\Phi\in C_{c}^{1}(\Omega)^{3\times3}\cap L_{2,\mathrm{sym}}(\Omega)$
one can compute $\Div\Phi$ by 
\[
\left(\Div\Phi\right)_{i\in\{1,2,3\}}=\left(\sum_{j=1}^{3}\partial_{j}\Phi_{ij}\right)_{i\in\{1,2,3\}}.
\]
The equations of linear elasticity in a domain $\Omega\subseteq\mathbb{R}^{3}$
read as follows (see e.g. \cite[p. 102 ff.]{duvaut1976inequalities})
\begin{align}
\partial_{0}(\rho\partial_{0}u)-\Div T & =f\label{eq:dynamic_ela}\\
T & =C\Grad_{c}u,\label{eq:material_law_ela}
\end{align}
where $u\in H_{\nu,0}(\mathbb{R};L_{2}(\Omega)^{3})$ denotes the
displacement field and $T\in H_{\nu,0}(\mathbb{R};L_{2,\mathrm{sym}}(\Omega))$
denotes the stress tensor. Note that due to the domain of the operator
$\Grad_{c}$ we have assumed an implicit boundary condition, which
can be written as 
\[
u=0\mbox{ on }\partial\Omega
\]
in case of a smooth boundary. The function $\rho\in L_{\infty}(\Omega)$
describes the density and is assumed to be real-valued and strictly
positive. The operator $C\in L(L_{2,\mathrm{sym}}(\Omega))$, linking
the stress and the strain tensor $\Grad_{c}u$ is assumed to be selfadjoint
and strictly positive definite. In viscous media it turns out that
the stress $T$ does not only depend on the present state of the strain
tensor, but also on its past. One way to model this relation is to
add a convolution term in \prettyref{eq:material_law_ela}, i.e.,
\begin{equation}
T(t)=C\Grad_{c}u(t)-\intop_{-\infty}^{t}B(t-s)\Grad_{c}u(s)\mbox{ d}s,\quad(t\in\mathbb{R}),\label{eq:material_law_visco}
\end{equation}
where $B\in L_{1,\mu}(\mathbb{R}_{\geq0};L(L_{2,\mathrm{sym}}(\Omega))).$
If we plug \prettyref{eq:material_law_visco} into \prettyref{eq:dynamic_ela}
we end up with the equation, which was considered in \cite{Dafermos1970_asymp_stab}
under the assumption, that $B$ is absolutely continuous.\\
We now show that \prettyref{eq:dynamic_ela} and \prettyref{eq:material_law_visco}
can be written as a system of the form \prettyref{eq:evol}. For this
purpose we define $v\coloneqq\partial_{0}u.$ Note that the operator
$C-B\ast=C^{\frac{1}{2}}\left(1-C^{-\frac{1}{2}}\left(B\ast\right)C^{-\frac{1}{2}}\right)C^{\frac{1}{2}}$
is boundedly invertible, since $C$ is boundedly invertible and $\left(t\mapsto C^{-\frac{1}{2}}B(t)C^{-\frac{1}{2}}\right)\in L_{1,\mu}(\mathbb{R}_{\geq0};L(L_{2,\mathrm{sym}}(\Omega)))$
which gives that $\left(1-C^{-\frac{1}{2}}\left(B\ast\right)C^{-\frac{1}{2}}\right)$
is boundedly invertible on $H_{\nu,0}(\mathbb{R};L_{2,\mathrm{sym}}(\Omega))$
for large $\nu$ (\prettyref{cor:Neumann}). Therefore we can write
\prettyref{eq:material_law_visco} as 
\[
C^{-\frac{1}{2}}\left(1-C^{-\frac{1}{2}}\left(B\ast\right)C^{-\frac{1}{2}}\right)^{-1}C^{-\frac{1}{2}}T=\Grad_{c}u
\]
and by differentiating the last equality we obtain 
\[
\partial_{0}C^{-\frac{1}{2}}\left(1-C^{-\frac{1}{2}}\left(B\ast\right)C^{-\frac{1}{2}}\right)^{-1}C^{-\frac{1}{2}}T=\Grad_{c}v.
\]
Thus, we formally get 
\begin{equation}
\left(\partial_{0}\left(\begin{array}{cc}
\rho & 0\\
0 & C^{-\frac{1}{2}}\left(1-C^{-\frac{1}{2}}\left(B\ast\right)C^{-\frac{1}{2}}\right)^{-1}C^{-\frac{1}{2}}
\end{array}\right)+\left(\begin{array}{cc}
0 & -\Div\\
-\Grad_{c} & 0
\end{array}\right)\right)\left(\begin{array}{c}
v\\
T
\end{array}\right)=\left(\begin{array}{c}
f\\
0
\end{array}\right),\label{eq:visco_ealstic}
\end{equation}
which yields an integro-differential equation of the form \prettyref{eq:evol}
with
\[
M(z)=\left(\begin{array}{cc}
\rho & 0\\
0 & C^{-\frac{1}{2}}\left(1-\sqrt{2\pi}C^{-\frac{1}{2}}\hat{B}(-\i z^{-1})C^{-\frac{1}{2}}\right)^{-1}C^{-\frac{1}{2}}
\end{array}\right)
\]
 and 

\[
A\coloneqq\left(\begin{array}{cc}
0 & -\Div\\
-\Grad_{c} & 0
\end{array}\right),
\]
which is a skew-selfadjoint and hence maximal monotone operator on
the state space $L_{2}(\Omega)^{3}\oplus L_{2,\mathrm{sym}}(\Omega)$.
Thus our solution theory (\prettyref{thm:sol_theory}) applies. So,
in the case that $B$ is absolutely continuous (as it was assumed
in \cite{Dafermos1970_asymp_stab}) we get the well-posedness due
to \prettyref{rem:absolute_cont}. If we do not assume smoothness
for $B$ we end up with the following result.
\begin{thm}
Assume that $B$ satisfies the hypotheses \prettyref{eq:selfadjoint}-\prettyref{eq:imaginary}
and that $C$ and $B(t)$ commute for each $t\in\mathbb{R}.$ Then
\prettyref{eq:visco_ealstic} is well-posed as an equation in $H_{\nu,0}\left(\mathbb{R};L_{2}(\Omega)^{3}\oplus L_{2,\mathrm{sym}}(\Omega)\right)$
for $\nu$ large enough.\end{thm}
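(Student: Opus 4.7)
The plan is to reduce the problem to \prettyref{thm:sol_theory} by verifying that the material law
\[
M(z)=\left(\begin{array}{cc}
\rho & 0\\
0 & C^{-\frac{1}{2}}\left(1-\sqrt{2\pi}C^{-\frac{1}{2}}\hat{B}(-\i z^{-1})C^{-\frac{1}{2}}\right)^{-1}C^{-\frac{1}{2}}
\end{array}\right)
\]
is analytic, bounded, and satisfies the solvability condition \prettyref{eq:solv} on a sufficiently small ball $B_{\mathbb{C}}(r_{1},r_{1})$. The operator $A$ is skew-selfadjoint, hence maximal monotone with $(0,0)\in A$, so only the material law part needs work. Analyticity and boundedness of $M$ on a small ball follow from the analyticity of $z\mapsto\hat{B}(-\i z^{-1})$ together with \prettyref{cor:Neumann}, which makes the Neumann series for $(1-\sqrt{2\pi}C^{-1/2}\hat{B}(-\i z^{-1})C^{-1/2})^{-1}$ convergent on a small enough ball.

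Since $M$ is block-diagonal, condition \prettyref{eq:solv} can be checked blockwise. For the $(1,1)$-entry, writing $z^{-1}=\i t+\nu$ with $\nu>\frac{1}{2r_{1}}$ gives $\Re z^{-1}\rho=\nu\rho\geq\frac{1}{2r_{1}}\operatorname{ess\,inf}\rho>0$ as a multiplication operator, so positivity is automatic from strict positivity of $\rho$.

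For the $(2,2)$-entry, the key idea is to introduce the conjugated kernel $\tilde{B}(t)\coloneqq C^{-1/2}B(t)C^{-1/2}$, which lies in $L_{1,\mu}(\mathbb{R}_{\geq0};L(L_{2,\mathrm{sym}}(\Omega)))$ since $C$ is boundedly invertible. The hypothesis that $C$ and $B(t)$ commute implies that $C^{-1/2}$ commutes with each $B(t)$, so $\tilde{B}(t)$ remains selfadjoint and the family $\{\tilde{B}(t)\}_{t\geq0}$ still commutes. Moreover,
\[
\langle t\,\Im\hat{\tilde{B}}(t-\i\nu_{0})x|x\rangle=\langle t\,\Im\hat{B}(t-\i\nu_{0})C^{-1/2}x|C^{-1/2}x\rangle\leq d\|C^{-1/2}\|^{2}|x|^{2}\quad(t\in\mathbb{R}),
\]
so $\tilde{B}$ satisfies hypotheses \prettyref{eq:selfadjoint}--\prettyref{eq:imaginary} with constant $\tilde{d}=d\|C^{-1/2}\|^{2}$ at the same $\nu_{0}$. \prettyref{prop:pos_B} then yields $r_{1},c_{0}>0$ such that
\[
\Re z^{-1}\left(1-\sqrt{2\pi}\hat{\tilde{B}}(-\i z^{-1})\right)^{-1}\geq c_{0}\quad(z\in B_{\mathbb{C}}(r_{1},r_{1})).
\]
Using that the second block equals $C^{-1/2}(1-\sqrt{2\pi}\hat{\tilde{B}}(-\i z^{-1}))^{-1}C^{-1/2}$ and that $C^{-1/2}$ is selfadjoint, I obtain for each $x$
\[
\Re\langle z^{-1}M_{22}(z)x|x\rangle=\Re\langle z^{-1}(1-\sqrt{2\pi}\hat{\tilde{B}}(-\i z^{-1}))^{-1}C^{-1/2}x|C^{-1/2}x\rangle\geq c_{0}|C^{-1/2}x|^{2}\geq c_{0}\|C^{1/2}\|^{-2}|x|^{2},
\]
which gives the desired strict positivity on $B_{\mathbb{C}}(r_{1},r_{1})$ after possibly shrinking $r_{1}$ and choosing a uniform $c>0$ for both blocks. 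Then \prettyref{thm:sol_theory} applies for all $\nu>\frac{1}{2r_{1}}$ and yields well-posedness of \prettyref{eq:visco_ealstic}.

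The only delicate point is the transfer of hypothesis \prettyref{eq:imaginary} through the similarity transform $B\mapsto C^{-1/2}BC^{-1/2}$; this is exactly where the commutativity assumption between $C$ and $B(t)$ is needed, since otherwise $\tilde{B}(t)$ would fail to be selfadjoint and \prettyref{prop:pos_B} could not be invoked. Once this reduction is in place, all remaining steps are standard applications of the results of Section~3.
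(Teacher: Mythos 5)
Your proof is correct and follows essentially the same route as the paper: conjugate the kernel to $\tilde{B}=C^{-\frac{1}{2}}B(\cdot)C^{-\frac{1}{2}}$, verify the hypotheses \prettyref{eq:selfadjoint}--\prettyref{eq:imaginary} with the constant $d\|C^{-\frac{1}{2}}\|^{2}$, invoke \prettyref{prop:pos_B}, and recover strict positivity of the second block at the cost of a factor $\|C^{\frac{1}{2}}\|^{-2}$. One minor remark: selfadjointness of $\tilde{B}(t)$ is automatic from that of $B(t)$ and $C^{-\frac{1}{2}}$ without any commutativity; the assumption that $C$ and $B(t)$ commute is really needed for hypothesis \prettyref{eq:commute} (so that the family $\{\tilde{B}(t)\}$ commutes and \prettyref{prop:pos_B} is applicable), not for \prettyref{eq:selfadjoint} as your closing paragraph suggests.
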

\begin{proof}
Note that $C^{-\frac{1}{2}}\left(B\ast\right)C^{-\frac{1}{2}}=\left(C^{-\frac{1}{2}}B(\cdot)C^{-\frac{1}{2}}\right)\ast$.
We show that $C^{-\frac{1}{2}}B(\cdot)C^{-\frac{1}{2}}$ satisfies
the hypotheses \prettyref{eq:selfadjoint}-\prettyref{eq:imaginary}.
The conditions \prettyref{eq:selfadjoint} and \prettyref{eq:commute}
are obvious. Furthermore 
\[
\widehat{\left(C^{-\frac{1}{2}}B(\cdot)C^{-\frac{1}{2}}\right)}(t-\i\nu)=C^{-\frac{1}{2}}\hat{B}(t-\i\nu)C^{-\frac{1}{2}}
\]
for each $t\in\mathbb{R},\nu>\mu$ and hence by the selfadjointness
of $C$ 
\[
\Im\widehat{\left(C^{-\frac{1}{2}}B(\cdot)C^{-\frac{1}{2}}\right)}(t-\i\nu)=C^{-\frac{1}{2}}\Im\hat{B}(t-\i\nu)C^{-\frac{1}{2}}.
\]
This gives 
\begin{align*}
\left\langle \left.t\Im\left(\widehat{C^{-\frac{1}{2}}B(\cdot)C^{-\frac{1}{2}}}\right)(t-\i\nu_{0})\Phi\right|\Phi\right\rangle  & =\left\langle \left.t\Im\hat{B}(t-\i\nu_{0})C^{-\frac{1}{2}}\Phi\right|C^{-\frac{1}{2}}\Phi\right\rangle \\
 & \leq d|C^{-\frac{1}{2}}\Phi|^{2}\\
 & \leq d\|C^{-\frac{1}{2}}\|^{2}|\Phi|^{2}
\end{align*}
for each $\Phi\in L_{2,\mathrm{sym}}(\Omega)$ and $t\in\mathbb{R}.$
Thus, according to \prettyref{prop:pos_B}, the operator $z^{-1}\left(1-\sqrt{2\pi}C^{-\frac{1}{2}}\hat{B}(-\i z^{-1})C^{-\frac{1}{2}}\right)^{-1}$
is uniformly strictly positive definite for $z\in B_{\mathbb{C}}(r_{1},r_{1})$
for sufficiently small $r_{1}>0.$ Therefore, we estimate 
\begin{align*}
 & \Re\left\langle \left.z^{-1}M(z)\left(\begin{array}{c}
v\\
T
\end{array}\right)\right|\left(\begin{array}{c}
v\\
T
\end{array}\right)\right\rangle _{L_{2}(\Omega)^{3}\oplus L_{2,\mathrm{sym}}(\Omega)}\\
 & =\Re\langle z^{-1}\rho v|v\rangle_{L_{2}(\Omega)^{3}}+\Re\langle z^{-1}C^{-\frac{1}{2}}\left(1-\sqrt{2\pi}C^{-\frac{1}{2}}\hat{B}(-\i z^{-1})C^{-\frac{1}{2}}\right)^{-1}C^{-\frac{1}{2}}\Phi|\Phi\rangle_{L_{2,\mathrm{sym}}(\Omega)}\\
 & \geq\Re z^{-1}c_{1}|v|_{L_{2}(\Omega)^{3}}^{2}+c_{2}|C^{-\frac{1}{2}}\Phi|_{L_{2,\mathrm{sym}}(\Omega)}^{2}\\
 & \geq\Re z^{-1}c_{1}|v|_{L_{2}(\Omega)^{3}}^{2}+\frac{c_{2}}{\|C^{\frac{1}{2}}\|^{2}}|\Phi|_{L_{2,\mathrm{sym}}(\Omega)}^{2},
\end{align*}
where we have used $\rho\geq c_{1}>0,\:\Re z^{-1}\left(1-\sqrt{2\pi}C^{-\frac{1}{2}}\hat{B}(-\i z^{-1})C^{-\frac{1}{2}}\right)^{-1}\geq c_{2}>0$. \end{proof}
\begin{rem}
In the theorem above, we have used the solution theory for skew-selfadjoint
$A$ proved in \cite{Picard}. However, if one allows $A$ to be maximal
monotone, this also covers contact problems of visco-elastic solids
with frictional boundary conditions using the theory developed in
\cite{Trostorff2012_nonlin_bd}. Such systems were considered by Migorski
et al. in \cite{Migorski2011} (see also \cite{Rodriguez2007} for
a numerical treatment). 
\end{rem}

\subsection{Materials with memory with non-equilibrium phase transition}

Let $\Omega\subseteq\mathbb{R}^{3}$ be an arbitrary domain. We consider
the following integro-differential inclusion describing the heat conduction
in a two-phase system with long-term memory (see \cite{Colli1993})
\begin{align}
\partial_{0}(1+C\ast)\theta+\partial_{0}(1+D\ast)\chi-\dive(1-K\ast)\grad\theta & =f,\nonumber \\
(\chi,\lambda\theta) & \in\partial_{0}\alpha+\mathbb{L}.\label{eq:phase_trans}
\end{align}
Here $\theta\in H_{\nu,0}(\mathbb{R};L_{2}(\Omega))$ and $\chi\in H_{\nu,0}(\mathbb{R};L_{2}(\Omega))$
are the unknowns representing the absolute temperature and the concentration
of the more energetic phase in our two-phase system, respectively.
The kernels $C$ and $D$ are assumed to lie in $L_{1,\mu}(\mathbb{R}_{\geq0};L(L_{2}(\Omega))$
and $K\in L_{1,\mu}(\mathbb{R}_{\geq0};L(L_{2}(\Omega)^{3}))$ for
some $\mu\geq0.$ The differential inclusion linking $\chi$ and $\theta$
models the phase transition accounting for superheating and supercooling
effects (see \cite{Visintin1985}). Here $\alpha$ and $\lambda$
are positive constants and $\mathbb{L}\subseteq L_{2}(\Omega)\oplus L_{2}(\Omega)$
denotes a maximal monotone relation with $(0,0)\in\mathbb{L}.$ Of
course the equations need to be completed by suitable boundary conditions.
For simplicity we may assume homogeneous Dirichlet boundary conditions
for the temperature $\theta$, i.e. 
\begin{equation}
\theta=0\mbox{ on }\partial\Omega.\label{eq:bd_phase_transition}
\end{equation}
This system (with more advanced boundary conditions) was studied by
Colli et al. (\cite{Colli1993}) in the case of scalar-valued kernels
$C,D$ and $K$ and for the particular maximal monotone relation $\mathbb{L}=H^{-1},$
where $H$ denotes the Heavy-side function. They proved the well-posedness
of the system using a fixed point argument and studied the limit problem
as $\alpha$ tends to 0 (the so-called Stefan problem, see e.g. \cite{Visintin1985}).
We will show that our approach allows to relax the assumptions on
the kernels and allows that $\mathbb{L}$ is an arbitrary maximal
monotone relation. For doing so, we begin to define the differential
operators involved.
\begin{defn}
We define the operator $\grad_{c}$ as the closure of 
\begin{align*}
\grad|_{C_{c}^{\infty}(\Omega)}:C_{c}^{\infty}(\Omega)\subseteq L_{2}(\Omega) & \to L_{2}(\Omega)^{3}\\
\phi & \mapsto(\partial_{1}\phi,\partial_{2}\phi,\partial_{3}\phi)^{T}
\end{align*}
and $\dive\coloneqq-(\grad_{c})^{\ast}.$\end{defn}
\begin{rem}
The domain of $\grad_{c}$ is the classical Sobolev-space $H_{0}^{1}(\Omega)$
of weakly differentiable $L_{2}$-functions, satisfying an abstract
homogeneous Dirichlet boundary condition. The domain of $\dive$ is
the maximal domain of the divergence on $L_{2}(\Omega),$ i.e. the
set of $L_{2}$-vector fields, whose distributional divergence is
again an $L_{2}$-function.
\end{rem}
In order to rewrite the system \prettyref{eq:phase_trans} as an evolutionary
inclusion, we integrate the first equation of \prettyref{eq:phase_trans},
i.e. we apply the operator $\partial_{0}^{-1}.$ Thus, we end up with
\begin{align*}
(1+C\ast)\theta+(1+D\ast)\chi-\dive\partial_{0}^{-1}(1-K\ast)\grad\theta & =\partial_{0}^{-1}f,\\
(\chi,\lambda\theta) & \in\partial_{0}\alpha+\mathbb{L}.
\end{align*}
Defining $q\coloneqq-\partial_{0}^{-1}(1-K\ast)\grad\theta,$ the
latter system, subject to the boundary condition \prettyref{eq:bd_phase_transition},
can be written as 
\begin{equation}
\left(\left(\begin{array}{c}
\theta\\
\chi\\
q
\end{array}\right),\left(\begin{array}{c}
\partial_{0}^{-1}f\\
0\\
0
\end{array}\right)\right)\in\partial_{0}\left(\begin{array}{ccc}
0 & 0 & 0\\
0 & \alpha & 0\\
0 & 0 & (1-K\ast)^{-1}
\end{array}\right)+\left(\begin{array}{ccc}
1+C\ast & 1+D\ast & 0\\
-\lambda & 0 & 0\\
0 & 0 & 0
\end{array}\right)+\left(\begin{array}{ccc}
0 & 0 & \dive\\
0 & \mathbb{L} & 0\\
\grad_{c} & 0 & 0
\end{array}\right),\label{eq:phase_trans_evo}
\end{equation}
in $H_{\nu,0}(\mathbb{R};L_{2}(\Omega)\oplus L_{2}(\Omega)\oplus L_{2}(\Omega)^{3})$,
where we have chosen $\nu>0$ large enough, such that $1-K\ast$ gets
boundedly invertible (compare \prettyref{cor:Neumann}). We note that
\[
A\coloneqq\left(\begin{array}{ccc}
0 & 0 & \dive\\
0 & \mathbb{L} & 0\\
\grad_{c} & 0 & 0
\end{array}\right)
\]
is maximal monotone with $(0,0)\in A$, since $\left(\begin{array}{ccc}
0 & 0 & \dive\\
0 & 0 & 0\\
\grad_{c} & 0 & 0
\end{array}\right)$ is skew-selfadjoint and $\mathbb{L}$ is maximal monotone with $(0,0)\in\mathbb{L}.$
The material law is of the form 
\[
M(z)=\left(\begin{array}{ccc}
0 & 0 & 0\\
0 & \alpha & 0\\
0 & 0 & (1-\sqrt{2\pi}\hat{K}(-\i z^{-1}))^{-1}
\end{array}\right)+z\left(\begin{array}{ccc}
1+\sqrt{2\pi}\hat{C}(-\i z^{-1}) & 1+\sqrt{2\pi}\hat{D}(-\i z^{-1}) & 0\\
-\lambda & 0 & 0\\
0 & 0 & 0
\end{array}\right).
\]

\begin{thm}
Assume that $K$ satisfies the hypotheses \prettyref{eq:selfadjoint}-\prettyref{eq:imaginary}.
Then the integro-differential inclusion \prettyref{eq:phase_trans_evo}
is well-posed. \end{thm}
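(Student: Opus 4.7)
The plan is to apply the solution theorem \prettyref{thm:sol_theory} to the reformulation \prettyref{eq:phase_trans_evo}. Maximal monotonicity of $A$ with $(0,0)\in A$ has already been justified right after \prettyref{eq:phase_trans_evo}. Analyticity and boundedness of the material law $M:B_{\mathbb{C}}(r,r)\to L(L_2(\Omega)\oplus L_2(\Omega)\oplus L_2(\Omega)^3)$ are immediate from the analyticity of $z\mapsto\hat{C}(-\i z^{-1})$, $\hat{D}(-\i z^{-1})$, $\hat{K}(-\i z^{-1})$ on $B_{\mathbb{C}}(r,r)$ together with \prettyref{cor:Neumann}, which ensures that $(1-\sqrt{2\pi}\hat{K}(-\i z^{-1}))^{-1}$ exists and is uniformly bounded once $r$ is small enough. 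Thus only the solvability condition \prettyref{eq:solv} for $\Re z^{-1}M(z)$ remains to verify.

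Writing $z^{-1}=\i t+\nu$ and reading off $M$ from the formula displayed just before the theorem, the operator $z^{-1}M(z)$ is block diagonal with respect to the splitting $L_2(\Omega)\oplus L_2(\Omega)\oplus L_2(\Omega)^3$: the last diagonal block is $z^{-1}(1-\sqrt{2\pi}\hat{K}(-\i z^{-1}))^{-1}$, while the first two coordinates are coupled through the $2\times 2$ block
\[
N(z):=\begin{pmatrix} 1+\sqrt{2\pi}\hat{C}(-\i z^{-1}) & 1+\sqrt{2\pi}\hat{D}(-\i z^{-1}) \\ -\lambda & \alpha z^{-1} \end{pmatrix}.
\]
Since $K$ satisfies \prettyref{eq:selfadjoint}--\prettyref{eq:imaginary}, \prettyref{prop:pos_B} applied to $K$ yields some $r_K>0$ and $c_K>0$ with $\Re z^{-1}(1-\sqrt{2\pi}\hat{K}(-\i z^{-1}))^{-1}\geq c_K$ uniformly on $B_{\mathbb{C}}(r_K,r_K)$, so the bottom block is uniformly strictly positive definite.

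For $N(z)$ the idea is to absorb the off-diagonal cross terms into the diagonal via Young's inequality. For $(\theta,\chi)\in L_2(\Omega)^2$ one computes
\begin{align*}
\Re\langle N(z)(\theta,\chi)|(\theta,\chi)\rangle
&= \Re\langle(1+\sqrt{2\pi}\hat{C}(-\i z^{-1}))\theta|\theta\rangle + \nu\alpha|\chi|^2 \\
&\quad + \Re\langle(1+\sqrt{2\pi}\hat{D}(-\i z^{-1}))\chi|\theta\rangle - \lambda\Re\langle\theta|\chi\rangle.
\end{align*}
By \prettyref{cor:Neumann} one may take $r$ small enough that $\sqrt{2\pi}\|\hat{C}(-\i z^{-1})\|\leq\tfrac{1}{2}$, so the first summand is at least $\tfrac{1}{2}|\theta|^2$. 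The cross terms are bounded in modulus by $(1+C_D+\lambda)|\theta||\chi|$, where $C_D:=|D|_{L_{1,\mu}(\mathbb{R}_{\geq 0};L(L_2(\Omega)))}$, and Young's inequality gives the bound $\tfrac{1}{4}|\theta|^2+(1+C_D+\lambda)^2|\chi|^2$. Choosing $\nu$ large enough (equivalently $r$ small enough) so that $\nu\alpha-(1+C_D+\lambda)^2\geq c_N>0$, one arrives at $\Re\langle N(z)(\theta,\chi)|(\theta,\chi)\rangle\geq\tfrac{1}{4}|\theta|^2+c_N|\chi|^2$ uniformly on some $B_{\mathbb{C}}(r_N,r_N)$.

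Setting $r_1:=\min\{r_K,r_N\}$ and combining both estimates yields \prettyref{eq:solv}, and \prettyref{thm:sol_theory} then delivers well-posedness and causality of the solution operator for \prettyref{eq:phase_trans_evo}. The main subtlety is the $2\times 2$ coupling block: its diagonal entries behave very differently as $\nu\to\infty$ (one stays near $1$, the other grows like $\nu\alpha$), but this asymmetry is precisely what allows the Young-type absorption to succeed without any extra hypotheses on $C$ or $D$, and is also the reason why the first equation of \prettyref{eq:phase_trans} had to be integrated in time before being recast as an evolutionary inclusion.
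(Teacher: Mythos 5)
Your proposal is correct and follows essentially the same route as the paper: reduce to the solvability condition \prettyref{eq:solv}, handle the $K$-block via \prettyref{prop:pos_B}, and control the $2\times 2$ coupling block $N(z)$ by absorbing the off-diagonal terms with Young's inequality into the $\nu\alpha|\chi|^2$ term. The only difference is cosmetic (you fix the Young parameters while the paper keeps a free $\varepsilon$), so no further comment is needed.
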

\begin{proof}
It is left to show that $M$ satisfies the solvability condition \prettyref{eq:solv}.
According to \prettyref{prop:pos_B} we have 
\[
\Re z^{-1}(1-\sqrt{2\pi}\hat{K}(-\i z^{-1}))^{-1}\geq c_{0}>0
\]
for some $c_{0}>0$ and all $z\in B_{\mathbb{C}}(r_{1},r_{1}),$ where
$r_{1}>0$ is chosen sufficiently small. Thus, it is left to estimate
\[
N(z)\coloneqq\left(\begin{array}{cc}
1+\sqrt{2\pi}\hat{C}(-\i z^{-1}) & 1+\sqrt{2\pi}\hat{D}(-\i z^{-1})\\
-\lambda & z^{-1}\alpha
\end{array}\right)
\]
for $z\in B_{\mathbb{C}}(r_{2},r_{2}),$ for suitable $0<r_{2}\leq r_{1}.$
For $u\coloneqq(\theta,\chi)\in L_{2}(\Omega)\oplus L_{2}(\Omega)$
and $z^{-1}=\i t+\nu$ for some $t\in\mathbb{R},\nu>\frac{1}{2r_{2}}$
we estimate 
\begin{align*}
 & \Re\langle N(z)u|u\rangle\\
 & =\Re\langle\left(1+\sqrt{2\pi}\hat{C}(-\i z^{-1})\right)\theta|\theta\rangle+\Re\langle\left(1+\sqrt{2\pi}\hat{D}(-\i z^{-1})\right)\chi|\theta\rangle-\Re\langle\lambda\theta|\chi\rangle+\Re\langle z^{-1}\alpha\chi|\chi\rangle\\
 & \geq(1-|C|_{1,\nu})|\theta|^{2}-(1+|D|_{1,\nu}+\lambda)|\chi||\theta|+\nu\alpha|\chi|^{2}\\
 & \geq\left(1-|C|_{1,\nu}-\frac{\varepsilon^{2}}{2}\right)|\theta|^{2}+\left(\nu\alpha-\frac{\left(1+|D|_{1,\nu}+\lambda\right)^{2}}{2\varepsilon^{2}}\right)|\chi|^{2}
\end{align*}
for all $\varepsilon>0.$ Choosing $r_{2}$ and $\varepsilon$ small
enough (and hence $\nu$ great enough) the latter can be estimated
by 
\[
\Re\langle N(z)u|u\rangle\geq c_{1}|u|^{2}\quad(z\in B_{\mathbb{C}}(r_{2},r_{2}))
\]
for some $c_{1}>0$. Thus, $M$ satisfies \prettyref{eq:solv} and
so \prettyref{thm:sol_theory} applies. \end{proof}
\begin{rem}
We note that in contrast to \cite{Colli1993} we do not need to impose
any regularity assumption on the kernel $C$. 
\end{rem}

\end{document}